\newtheorem{theorem}{Theorem}[section]
\newtheorem{corollary}[theorem]{Corollary}
\newtheorem{lemma}[theorem]{Lemma}
\newtheorem{proposition}[theorem]{Proposition}
\theoremstyle{definition}
\newtheorem{definition}[theorem]{Definition}
\newtheorem{remark}[theorem]{Remark}
\newtheorem{example}[theorem]{Example}
\numberwithin{equation}{section}
\begin{document}
	
	
	\baselineskip=17pt
	
	
	\title{A note on the   orbit equivalence of injective actions}

	\author[X. Q. Qiang]{Xiangqi Qiang}
	\address{1.School of Science \\ Jiangsu University of Science and Technology\\
		Zhenjiang 212100, China}
			\address{2.School of Mathematical Science \\ Yangzhou University\\
			Yangzhou 225002, China}
	\email{xq.qiang@just.edu.cn}
	
	\author[C. J. Hou]{Chengjun Hou}
	\address{School of Mathematical Science \\ Yangzhou University\\
		Yangzhou 225002, China}
	\email{cjhou@yzu.edu.cn}

	\begin{abstract}
	 	We characterise the groupoid $C^*$-algebras associated  to  the transformation groupoids of  injective actions of discrete countable Ore semi-groups on compact topological  spaces  in terms of  the   reduced crossed product from the  dual actions, and characterise the continuous orbit equivalence for injective actions by means of  the   transformation groupoids, as well as   their  reduced groupoid $C^*$-algebras. Finally,  we characterize  the   injective  action  of  semi-group on its   compactifications.
		
	\end{abstract}
	
	\subjclass[2010]{Primary 46L05; Secondary 37B05,46L35}
	
	\keywords{Injective actions, transformation  groupoids, $C^*$-algebras, continuous orbit equivalence}
	
	\maketitle
	
\section{Introduction}
There are a large number of significant and interesting research dealing with    the interplay between the orbit equivalence of topological dynamical systems and the classification of $C^{*}$-algebras.  Groupoid theory and   	the crossed product construction play a very important role for these results. At the very beginning,  Giordano, Putnam and Skau studied the relationship between   orbit equivalence and $C^{*}$-crossed products for   minimal homeomorphisms of Cantor sets in \cite{article.5}. Their studies have  been    generalized to many different directions, including Tomiyama's results on topologically free homeomorphisms on compact Hausdorff spaces (\cite{article.15}), Matsumoto et al.'s classification results of irreducible  topological Markov shifts in terms of Cuntz-Krieger algebras (\cite{article.10,article.11}), and Li's  characterization of  group actions and partial dynamical systems by transformation groupoids and  their reduced crossed product $ C^{*} $-algebras (\cite{article.8,article.9}), and so on (\cite{article.2,article.6,article.12}).

In  \cite{article.14}, Renault and   Sundar studied actions of locally compact Ore semi-groups  on compact topological spaces. They   gave the construction of the semi-direct product	$X\rtimes P$, where $X$ is   the order compactification of  locally compact   semi-group $ P $, and turned out that it is a locally compact groupoid  and has a continuous Haar system. They also proved that 	this groupoid is  a reduction of a semi-direct product by a group and  the   Wiener-Hopf $C^*$-algebra of   $ P $ is isomorphic to the reduced $C^*$-algebra of the semi-direct product groupoid $X\rtimes P$. In \cite{article.4}, Ge studied the compactification of natural numbers and  characterized the associated compact Hausdorff spaces. Inspired by their beautiful results, in this paper,  we will consider    injective actions of discrete countable Ore semi-groups on compact topological  spaces, and study the relationship between the orbit structure of   these actions  and algebraic structure of the associated  groupoids and their  $C^*$-algebras. We prove that two topologically free injective actions are continuously orbit equivalent if and only if their transformation groupoids are isomorphic as \'{e}tale groupoids, if and only if there is a $C^{*}$-isomorphism  preserving the canonical Cartan subalgebras between the corresponding groupoid reduced $C^{*}$-algebras.  We   show that an  injective action can be dilated as a group action on  a quotient space by homeomorphisms, and  our   groupoid turns out to be a reduction of the dilation.  The multiplication operation on the group can  naturally give rise to an  action of the  group on itself, thus  we consider the induced  action of the semi-group on its  compactifications. We also show that the injective action of a semi-group on its one-point  compactification is determined uniquely up to conjugacy by two conditions.

The paper is organized as follows. In Section  2, we  characterize the reduced groupoid $C^*$-algebras  associated to the transformation groupoids of injective actions of discrete countable Ore semi-groups on compact topological  spaces  in terms of the   reduced crossed product from the  dual actions.    In Section 3, we introduce the notion of  continuous orbit equivalence for injective actions,  and characterize them in terms of  the associated transformation groupoids, as well as   their reduced groupoid $C^{*}$-algebras with canonical Cartan subalgebras. In  Section 4, we study the action of semi-group on its  compactifications.

Throughout this paper, we will use the following notions.  For a topological  groupoid $\mathcal{G}$, let $\mathcal{G}^{(0)}$   be  the unit space. The range and   source maps $r,s$ from $\mathcal{G}$ onto $\mathcal{G}^{(0)}$ are defined by $r(g)=gg^{-1}$ and $s(g)=g^{-1}g$, respectively.
If $r$ and $s$ are local homeomorphisms, then $\mathcal{G}$ is called to be \'{e}tale. We say that $\mathcal{G}$ is \emph{topologically principal} if $\left\lbrace u\in \mathcal{G}^{(0)}:\,\mathcal{G}_{u}^{u}=\left\lbrace u\right\rbrace \right\rbrace $ is dense in $\mathcal{G}^{(0)}$, where $\mathcal{G}_{u}^{u}= \left\lbrace\gamma\in \mathcal{G}:  r(\gamma) =s(\gamma)=u\right\rbrace   $ is the isotropy group at a unit $u\in \mathcal{G}^{(0)}$.
We refer to \cite{book.1,book.2} for more details on topological groupoids and their $C^*$-algebras.

\section{The transformation groupoid of injective action}

Let $X$ be a second-countable compact Hausdorff space, $G$ a countable discrete group and $P$ a right Ore sub-semigroup of $G$, i.e., $e\in P$ and  $G=PP^{-1}$, where $e$ is the identity element in $G$.  By  a right action $\theta$ of $P$ on $X$ we mean that  $\theta_{a}$ is a continuous and injective map from $X$ into itself  and   satisfies that $\theta_{a}\theta_{b}=\theta_{ba}$ for every $a,b \in P$, and $\theta_e=id_{X}$, the identity map on $X$.   We use the symbol $ P\curvearrowright_{\theta} X $  to denote such an injective   action.

Each injective action $ P\curvearrowright_{\theta} X $ induces a dual action, $ P\curvearrowright_{\alpha} C(X)$, of $P$ on the abelian $C^*$-algebra $C(X)$ by surjective $\ast$-homomorphisms, where each  $\alpha_m$ is defined by $\alpha_{m}(f) =f\theta_{m} $ for $f\in C(X)$ and  $\alpha_{ab}=\alpha_{a}\alpha_{b}$ for all $a$ and $b$ in $P$.

\begin{remark}	
	Given an injective  action  $ P\curvearrowright_{\theta}  X $,
	for $x\in X$, let $$Q_{x}:=\{g\in G: \exists a,b\in P,\, y\in X\; \text{such that} \; g=ab^{-1} \; \text{and} \; \theta_{a}(x)=\theta_{b}(y)\}.$$
	For  $x,y\in X$ and   $g\in G$, one can check that, if $g=ab^{-1}=mn^{-1}$ for $a,b,m,n\in P$, then $\theta_{a}(x)=\theta_{b}(y)$ if and only if $\theta_{m}(x)=\theta_{n}(y)$.
	It follows that
	$g\in Q_{x}$ if and only if there exists a unique element, denoted by $u(x,g)$, in $ X$ such that if $g=ab^{-1}$ for $a,b\in P$, then $\theta_{a}(x)=\theta_{b}(u(x,g))$.
\end{remark}

Let $$ X\rtimes P=\{(x,g)\in X\times G : g\in Q_x\}. $$
Then, under the following operations
$$(x,g)(y,h)=(x,gh)\;\text{only if}\;y=u(x,g),$$
$$(x,g)^{-1}=(u(x,g),g^{-1}),$$
$X\rtimes P $ is a groupoid with the unit space $(X\rtimes P)^{(0)}=X\times \{e\}$, the range map, $r(x,g)=(x,e)$ and the source map $s(x,g)=(u(x,g),e)$ for $(x,g)\in X\rtimes P$.

\begin{lemma}
	The  map $u: (x,g)\in X\rtimes P \rightarrow u(x,g)\in X$ is continuous, where $u(x,g)$ is defined as in Remark 2.1. Moreover, $u(x,m)=\theta_m(x)$ for $x\in X$ and $m\in P$, and $u(u(x,g),h)=u(x,gh)$ when $(x,g), (u(x,g),h)\in X\rtimes P$.
	
	Thus, under the relative product topology  on $X\times G$, $X\rtimes P$ is a second-countable locally compact Hausdorff groupoid. Furthermore, $X\rtimes P$ is \'{e}tale if and only if $\theta_{a}(X)$ is open in $X$ for each $a\in P$.
	
\end{lemma}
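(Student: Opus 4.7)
The plan is to prove the four assertions in order, leaning on two recurring facts: (i) each $\theta_b$, being a continuous injection from the compact Hausdorff space $X$, restricts to a homeomorphism $X \to \theta_b(X)$, whose inverse is therefore continuous; and (ii) since $G$ is discrete, the relative product topology on $X \rtimes P \subseteq X \times G$ is the topological sum of its slices $S_g := (X\rtimes P)\cap(X\times\{g\})$, so continuity, openness, and compactness can all be checked slice by slice.

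For continuity of $u$, I fix $g$ and write $g = ab^{-1}$ with $a,b \in P$; Remark 2.1 gives $u(\,\cdot\,,g) = \theta_b^{-1}\circ\theta_a$ on $S_g$, which is continuous by (i), and then (ii) upgrades this to continuity on all of $X\rtimes P$. The identity $u(x,m) = \theta_m(x)$ for $m\in P$ is immediate from the defining equation applied to the decomposition $m = m\cdot e^{-1}$. For the cocycle identity I invoke the right Ore condition $G = PP^{-1}$ to factor $b^{-1}c = rs^{-1}$ with $r,s\in P$ (where $g=ab^{-1}$, $h=cd^{-1}$), so that $br = cs$ in $P$ and $gh = (ar)(ds)^{-1}$. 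Setting $y = u(x,g)$, the chain $\theta_{ar}(x) = \theta_r\theta_a(x) = \theta_r\theta_b(y) = \theta_{br}(y) = \theta_{cs}(y) = \theta_s\theta_d(u(y,h)) = \theta_{ds}(u(y,h))$ together with the uniqueness in Remark 2.1 yields $u(x,gh) = u(u(x,g),h)$, and simultaneously shows $(x,gh) \in X\rtimes P$.

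For the topological statement, each slice $S_g = \theta_a^{-1}(\theta_b(X))\times\{g\}$ is closed in $X\times\{g\}$ (as $\theta_b(X)$ is compact, hence closed in the Hausdorff space $X$) and therefore compact, so $X \rtimes P$ is a countable topological sum of compact Hausdorff spaces, in particular second-countable, locally compact, and Hausdorff; the groupoid axioms reduce to the two identities just established. For the \'{e}tale characterization, if every $\theta_a(X)$ is open then on every slice $r|_{S_g}:(x,g)\mapsto x$ is a homeomorphism onto the open set $\theta_a^{-1}(\theta_b(X))$, and $s|_{S_g}=\theta_b^{-1}\circ\theta_a$ is a homeomorphism onto the open set $\theta_b^{-1}(\theta_a(X)\cap\theta_b(X))$, so $r$ and $s$ are local homeomorphisms. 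The converse is where choosing the right slice matters: taking $g=a\in P$ one has $S_a = X\times\{a\}$ entirely contained in $X \rtimes P$, with $s(x,a)=\theta_a(x)$, so if $s$ is a local homeomorphism it is in particular open and $\theta_a(X) = s(S_a)$ is forced to be open in $X$. I expect this last step --- extracting the openness of $\theta_a(X)$ from the \'{e}tale hypothesis by a judicious choice of slice --- to be the only real piece of creative work; everything else is careful bookkeeping on top of the slice decomposition.
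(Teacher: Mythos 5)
Your proof is correct and follows essentially the same route as the paper's: the same Ore-condition computation (factoring $b^{-1}c$ in $PP^{-1}$) for the cocycle identity, and the same choices --- the full slice $X\times\{a\}$ for the forward direction and the graphs $\theta_a^{-1}(\theta_b(X))\times\{g\}$ for the converse --- in the \'{e}tale characterization. Your only real departure is establishing continuity of $u$ via the closed-form $u(\cdot,g)=\theta_b^{-1}\circ\theta_a$ on each slice instead of the paper's sequential argument, which is a mild tightening, since the paper's ``suppose that $y_{n}\rightarrow y$'' tacitly relies on compactness and the uniqueness of $u(x,g)$ to upgrade a subsequential limit to convergence of the whole sequence.
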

\begin{proof}
	
	Suppose that $(x_{n},g_{n})\rightarrow (x,g)\in X\rtimes P$. Then $g_{n}=g$ for large $n$, so we can assume that $g_{n}=g$ for all $n$. Let $y_{n}=u(x_{n},g)$ and suppose that $y_{n}\rightarrow y$. Choose $a,b\in P$ such that $g=ab^{-1}$ and $\theta_{a}(x_{n})=\theta_{b}(y_{n})$. Then   $\theta_{a}(x)=\theta_{b}(y)$, it follows that $y=u(x,g)$, proving that $u$ is continuous.

	Given $(x,g)$ and $(u(x,g),h)$ in $X\rtimes P$, choose $a,b,c,d,m$ and $n$ in $ P$ such that $g=ab^{-1}$, $h=cd^{-1}$ and $b^{-1}c=mn^{-1}$. Then $bm=cn$ and $gh=am(dn)^{-1}$. It follows from $\theta_{a}(x)=\theta_{b}(u(x,g))$ and $\theta_{c}(u(x,g))=\theta_{d}(u(u(x,g),h))$ that $\theta_{am}(x)=\theta_{dn}(u(u(x,g),h))$. Since moreover $\theta_{am}(x)=\theta_{dn}(u(x,gh))$, we have $ u(u(x,g),h)= u(x,gh)$.
	
	By the continuity of $u$, one checks that $X\rtimes P$ is a second-countable locally compact Hausdorff groupoid.  If $  X\rtimes P $ is  \'{e}tale, then the   source map $s: X\rtimes P\rightarrow  (X\rtimes P)^{(0)}, (x,g)\mapsto (u(x,g),e)$ is open. It follows that, for each $a\in P$, $s(X\times\{a\})=\theta_{a}(X)\times \{e\}$ is open in $  X\rtimes P $. Thus $\theta_{a}(X)$ is open in $X$ for each $a\in P$.
	
	For the converse, assume that $\theta_{a}(X)$ is open in $X$ for each $a\in P$. Then $\theta_{a}: X\rightarrow \theta_{a}(X)$ is a homeomorphism.
	For any $(x_{0},g_{0})\in  X\rtimes P $, there exist $m,n\in P$, $y_{0}\in X$ such that $g_{0}=mn^{-1}$ and $\theta_{m}(x_{0})=\theta_{n}(y_{0})$. Since $\theta_{m}(X)$  and  $\theta_{n}(X)$ are open in $X$, there exist open subsets $W_{1}\subseteq \theta_{m}(X)$ and $W_{2}\subseteq\theta_{n}(X)$ such that $\theta_{m}(x_{0})\in W_{1}$, $\theta_{n}(y_{0})\in W_{2}$. Set $W=W_{1}\cap W_{2}$, $U= \theta_{m}^{-1}(W)$, and $V= \theta_{n}^{-1}(W)$. Thus $ \theta_{m}(x_{0})=\theta_{n}(y_{0})\in W$, $U,V$ are open in $X$ and $ \theta_{m}(U)=\theta_{n}(V)=W$. In this case, for each $x\in U$, there exists $y\in V$ such that $\theta_{m}(x)=\theta_{n}(y)$,  it follows that $(x,g_{0})\in X\rtimes P $ for each $x\in U$, which implies $ U\times\{g_{0}\}$ is an open neighbourhood of $(x_0,g_0)$ in $X\rtimes P $.
	Thus the range map $r|_{U\times\{g_{0}\}}: U\times\{g_{0}\}\rightarrow U\times\{e\}$ is a homeomorphism. Hence $  X\rtimes P $ is  \'{e}tale.
	
\end{proof}

\begin{remark}\label{R1} Throughout this paper, we always assume that $\theta_{a}(X)$ is open in $X$ for each $a\in P$,and  we call $  X\rtimes P $ the transformation groupoid attached to $ P\curvearrowright_{\theta} X $. In this case, the unit space $ (X\rtimes P)^{(0)} $ identifies with $X$ by identifying $(x,e)$ with $e$. Thus $r(x,g)=x$ and $s(x,g)=u(x,g)$. From the proof of the last lemma, for each $(x,g)\in  X\rtimes P$, there exists an open neighbourhood $U$ of $x$ in $X$ such that $(x,g)\in U\times\{g\}\subseteq X\rtimes P$.
	
	Let $c:\;X\rtimes P\rightarrow G$ be  defined by $c(x,g)=g$ for $(x,g)\in X\rtimes P$. Then $c$ is a continuous homomorphism and its kernel $ker(c)=X\times\{e\}$ is an amenable \'{e}tale subgroupoid of $X\rtimes P$. It follows from \cite[Proposition 10.1.11]{book.2} that if $G$ is amenable then $X\rtimes P$ is also amenable.
	
\end{remark}

In \cite{article.14}, the transformation groupoid is isomorphic to a reduction of the Mackey range semi-direct product defined by the canonical cocycle $c$ when $P$ is a locally compact Ore semi-group. For the countable discrete case, in the following we can give a direct construction of this result.

Let  $\widetilde{X}  $ be the quotient space of $X\times G$  by the following equivalence relation:
$$(x,g)\sim(y,h)\Leftrightarrow \exists a,b\in P\;\text{such that}\;gh^{-1}=ab^{-1}\;   \text{and }\; \theta_{a}(x)=\theta_{b}(y).$$
One can check that, under the quotient topology of the product topology on $X\times G$, $\widetilde{X}$ is a locally compact and Hausdorff space, and the canonical quotient map  $\pi: X\times G\rightarrow \widetilde{X} $ is surjective, continuous and open.
Denote by $[x,g]$ the equivalence class of $(x,g)$ in the equivalence relation. Then $[x,g]=\{ (u(x,k) ,k^{-1}g)):k\in Q_{x}\}$ for $(x,g)\in X\times P$, and $[x,e] =[u,e] $ if and only if $x=u$.

\begin{remark}\label{R2} 	Define the right  action $ G\curvearrowright_{ \tilde{\theta}}\widetilde{X} $  of $G$ on $\widetilde{X}$ by homeomorphisms as follows: for $ [x,g] \in\widetilde{X}, h\in G$, $$\tilde{\theta}_{h} ([x,g] )=[x,gh]  .$$
	The associated transformation groupoid, $\widetilde{X}\rtimes_{\tilde{\theta}}G: =\widetilde{X}\times G$, with the product topology and the following   multiplication and inverse:
	$$([x,g],h)([x,gh],h')=([x,g],hh'),\;([x,g],h)^{-1}=([x,gh],h^{-1}),$$ is an \'{e}tale groupoid.
	The unit space $(\widetilde{X}\rtimes_{\tilde{\theta}}G)^{(0)}$ identifies with  $\widetilde{X}$ by identifying $([x,g],e)$ with $[x,g]$.  Then $r([x,g],h)=[x,g]$  and $s([x,g],h)=[x,gh]$.
	
	Let $X'=\{[x,e]:x\in X\}$. Then $X'$ is clopen in $\widetilde{X}$ and $ \widetilde{X}\rtimes_{\tilde{\theta}}G $-full in the sense that the intersection of $X'$ and each orbit of $ G\curvearrowright_{ \tilde{\theta}}\widetilde{X} $ is not empty. In fact, for any $[x,g]\in \widetilde{X} $,
	$ ([x,g],g^{-1})\in r^{-1}([x,g] )\cap s^{-1}(X') $. Note that the reduction    $  \widetilde{X}\rtimes_{\tilde{\theta}}G|_{X'}=r^{-1}(X')\cap s^{-1}(X')$    is an \'{e}tale subgroupoid of $\widetilde{X}\rtimes_{\tilde{\theta}}G$. 	Recall that two \'{e}tale groupoids $\mathcal{G}$ and $\mathcal{H}$ are Kakutani equivalent if there are full clopen subsets $X\subseteq\mathcal{G}^{(0)}$ and $Y\subseteq\mathcal{H}^{(0)}$ such that $ \mathcal{G}|_{X}\cong \mathcal{H}|_{Y} $ (\cite{article.3}). The   following proposition is based on the ideas of  \cite{article.14} by   Renault and   Sundar.

\end{remark}

\begin{proposition}
	$ X\rtimes P $ is isomorphic to the reduction $ \widetilde{X}\rtimes_{\tilde{\theta}}G|_{X'}  $. Consequently, $ X\rtimes P $ is Kakutani equivalent to $ \widetilde{X}\rtimes_{\tilde{\theta}}G$.
\end{proposition}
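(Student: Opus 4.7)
The plan is to write down an explicit isomorphism
$\phi: X\rtimes P \longrightarrow \widetilde{X}\rtimes_{\tilde{\theta}}G|_{X'}$
and verify the groupoid and topological axioms directly, after which Kakutani equivalence is immediate from Remark~\ref{R2}.

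First I would define $\phi$ by $\phi(x,g) = ([x,e],g)$. For this to land in the reduction I need $s([x,e],g)=[x,g]\in X'$. But $(x,g)\in X\rtimes P$ means $g\in Q_x$, so there exist $a,b\in P$ with $g=ab^{-1}$ and $\theta_a(x)=\theta_b(u(x,g))$; hence $(x,g)\sim (u(x,g),e)$, i.e.\ $[x,g]=[u(x,g),e]\in X'$. Conversely any element of the reduction has the form $([x,e],h)$ with $[x,h]\in X'$, which by the same calculation forces $h\in Q_x$, so $\phi$ is surjective. Injectivity follows from the identity $[x,e]=[y,e]\Leftrightarrow x=y$ noted after the definition of $\widetilde{X}$. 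Thus $\phi$ is a bijection that induces the homeomorphism $x\mapsto[x,e]$ between the unit spaces $X$ and $X'$.

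Next I would check that $\phi$ is a groupoid homomorphism. For composability: $(x,g)(y,h)$ is defined in $X\rtimes P$ iff $y=u(x,g)$, in which case $[y,e]=[x,g]=s([x,e],g)=r([y,e],h)$, so the images are composable in $\widetilde{X}\rtimes_{\tilde{\theta}}G$. Then
\[
\phi(x,g)\phi(y,h)=([x,e],g)([x,g],h)=([x,e],gh)=\phi(x,gh)=\phi((x,g)(y,h)),
\]
and similarly $\phi((x,g)^{-1})=\phi(u(x,g),g^{-1})=([u(x,g),e],g^{-1})=([x,g],g^{-1})=\phi(x,g)^{-1}$.

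Finally I would establish that $\phi$ is a homeomorphism, which is the main technical point. Continuity of $\phi$ follows because $(x,g)\mapsto ([x,e],g)$ is the restriction of $\pi\times\mathrm{id}_G$ to $X\rtimes P\subseteq X\times G$. For openness I use that both groupoids are \'etale: by Lemma~2.2 every point $(x_0,g_0)\in X\rtimes P$ has an open neighbourhood of the form $U\times\{g_0\}$, whose image under $\phi$ is $\pi(U\times\{e\})\times\{g_0\}$ intersected with the reduction; since $\pi$ is open, this is open in $\widetilde{X}\rtimes_{\tilde{\theta}}G|_{X'}$. Hence $\phi$ is an isomorphism of \'etale groupoids. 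The Kakutani equivalence then follows directly, since $X'$ was shown in Remark~\ref{R2} to be a full clopen subset of $(\widetilde{X}\rtimes_{\tilde{\theta}}G)^{(0)}$.

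The most delicate point will be bookkeeping in the multiplication check, particularly the identification $[y,e]=[x,g]$ when $y=u(x,g)$, since everything rests on moving freely between the $X\rtimes P$ picture (where the base point $x$ is fixed and $g$ tracks the orbit) and the $\widetilde{X}$ picture (where the class $[x,g]$ already encodes the source).
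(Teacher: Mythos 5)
Your proposal is correct and follows essentially the same route as the paper: both define the map $(x,g)\mapsto([x,e],g)$, verify it is a bijective groupoid homomorphism onto the reduction, and conclude via the homeomorphism $x\mapsto[x,e]$ of unit spaces and the fullness of $X'$ from Remark~2.4. You merely spell out the openness argument (via the basic open sets $U\times\{g_0\}$ and openness of $\pi$) that the paper leaves as "easy to see."
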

\begin{proof}
	Note that   $r([x,e],g)=[x,e]\in X'$ and $s([x,e],g)=[x,g]=[u(x,g),e]\in X'$ for each $(x,g)\in X\rtimes P$. We can therefore define a map  $$ \Phi:X\rtimes P \rightarrow \widetilde{X}\rtimes_{\tilde{\theta}}G|_{X'}, \;(x,g) \mapsto ([x,e],g),$$ and $\Phi$ is clearly an injective groupoid homomorphism. For a given $(y,g)\in \widetilde{X}\rtimes_{\tilde{\theta}}G|_{X'}$, since $r(y,g)=y\in X'$,   there exists $x\in X$ such that $y=[x,e]$. In this case, $s([x,e],g)=[x,g]\in X'$, which implies $(x,g)\in X\rtimes P$, proving that $\Phi$ is surjective.

	Let $  \varphi $ be the restriction of $\Phi$ to the unit space $(X\rtimes P )^{(0)}=X $,
	one can check that $\varphi: x\in X\rightarrow [x,e]\in X'$ is a homeomorphism. It is then easy to see that $ \Phi$ is a homeomorphism from $ X\rtimes P $ onto $  \widetilde{X}\rtimes_{\tilde{\theta}}G|_{X'} $.

\end{proof}

\begin{remark}\label{R3}
	Given an injective right action $P\curvearrowright_{\theta}X$, we further assume that each map $\theta_{m}$ is a homeomorphism on $X$. For each $g\in G$, it follows from the assumption that there exist $m,n\in P$ such that $g=mn^{-1}$.
	Define
	$$\hat{\theta}_{g}(x)=\theta_{n}^{-1}(\theta_{m}(x)) \;\;\text{for}\;  x\in X.$$
	One can check that $  \hat{\theta}  $ is a right    action of $G$ on $X$ by homeomorphisms. In this case, the equivalence relation on $X\times G$ defined before Remark 2.4 can be rewritten as follows: for $ (x,g),(y,h)\in X\times G$,
	$$\begin{array}{lll}
		(x,g)\sim  (y,h)
		\Leftrightarrow y=\hat{\theta}_{gh^{-1}}(x).
	\end{array}$$
	Thus $[x,g] =[\hat{\theta}_{g}(x),e] $ for $[x,g]\in \widetilde{X}$, and  $ \widetilde{X}=X'$.
\end{remark}

The transformation groupoid  $X\rtimes_{\hat{\theta}} G$ associated to the above group action $(X,G,\hat{\theta})$   is given by the set $X\times G$ with the product topology,  multiplication $(x,g)(y,h)=(x,gh)$ if $y=\hat{\theta}_g(x)$, and inverse $(x,g)^{-1}=(\hat{\theta}_g(x),g^{-1})$.

Remark \ref{R2} and  Remark \ref{R3} combine to give the following result.
\begin{corollary}
	If $P\curvearrowright_{\theta}X$ is a right action by homeomorphisms, then $ X\rtimes P $ is isomorphic to $ \widetilde{X}\rtimes_{\tilde{\theta}}G   $, and  both of them are isomorphic to     $X\rtimes_{\hat{\theta}}  G  $.
\end{corollary}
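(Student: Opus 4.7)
The plan is to prove the corollary in two steps corresponding to the two asserted isomorphisms, each of which reduces to assembling already-established facts from Proposition 2.4 and Remarks \ref{R2} and \ref{R3}, together with one elementary ``intertwining'' calculation.

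\textbf{Step 1: $X\rtimes P\cong \widetilde{X}\rtimes_{\tilde{\theta}}G$.} The key observation is that, under the assumption that every $\theta_{m}$ is a homeomorphism, Remark \ref{R3} gives the identity $[x,g]=[\hat{\theta}_{g}(x),e]$ for every $(x,g)\in X\times G$; hence $\widetilde{X}=X'$. Consequently the reduction appearing in Proposition 2.4 satisfies $\widetilde{X}\rtimes_{\tilde{\theta}}G|_{X'}=\widetilde{X}\rtimes_{\tilde{\theta}}G$ (both as sets and as topological groupoids), and Proposition 2.4 then yields $X\rtimes P\cong\widetilde{X}\rtimes_{\tilde{\theta}}G$ directly. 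Nothing beyond invoking the previous two results is needed here.

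\textbf{Step 2: $\widetilde{X}\rtimes_{\tilde{\theta}}G\cong X\rtimes_{\hat{\theta}}G$.} I would first recall from the proof of Proposition 2.4 that the map $\varphi:X\to X',\ x\mapsto [x,e]$ is a homeomorphism, so under the identification $\widetilde{X}=X'$ from Step 1 the space $\widetilde{X}$ is just a relabelling of $X$. Next I would check that $\varphi$ intertwines the two actions: for $x\in X$ and $g\in G$,
\[
\varphi(\hat{\theta}_{g}(x))=[\hat{\theta}_{g}(x),e]=[x,g]=\tilde{\theta}_{g}([x,e])=\tilde{\theta}_{g}(\varphi(x)),
\]
where the middle equality uses the rewritten equivalence relation in Remark \ref{R3}. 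Then I would define
\[
\Psi:X\rtimes_{\hat{\theta}}G\longrightarrow \widetilde{X}\rtimes_{\tilde{\theta}}G,\qquad (x,g)\longmapsto([x,e],g),
\]
and verify routinely that $\Psi$ is a continuous bijective groupoid homomorphism with continuous inverse $([x,e],g)\mapsto(x,g)$: the unit-space restriction is $\varphi$, compatibility with the range and source maps is immediate, and composability of $(x,g),(y,h)$ in $X\rtimes_{\hat{\theta}}G$ (i.e.\ $y=\hat{\theta}_{g}(x)$) translates under $\varphi$ to composability of $\Psi(x,g),\Psi(y,h)$ in $\widetilde{X}\rtimes_{\tilde{\theta}}G$ (i.e.\ $[y,e]=[x,g]$) via the intertwining identity above.

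\textbf{Anticipated obstacles.} Almost none: the only nontrivial inputs are already in the paper. The single point requiring a little care is the verification that $\widetilde{X}=X'$ implies equality of the reduction with the ambient groupoid both topologically and in terms of the range--source structure; but this is automatic because $X'$ is clopen and $\widetilde{X}$-full (Remark \ref{R2}), so when $X'=\widetilde{X}$ one has $r^{-1}(X')\cap s^{-1}(X')=\widetilde{X}\rtimes_{\tilde{\theta}}G$ with the subspace topology agreeing with the ambient one. Everything else is bookkeeping with the quotient topology on $\widetilde{X}$ and the definitions of $\tilde{\theta}$ and $\hat{\theta}$.
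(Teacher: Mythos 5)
Your proposal is correct and follows essentially the same route the paper intends: the paper derives the corollary by combining Proposition 2.5 with the observation $\widetilde{X}=X'$ from Remark \ref{R3} (so the reduction is the whole groupoid), and your Step 2 intertwining computation $\varphi(\hat{\theta}_{g}(x))=[\hat{\theta}_{g}(x),e]=[x,g]=\tilde{\theta}_{g}(\varphi(x))$ is exactly the conjugacy argument the paper records later in Proposition 3.10. (Minor point: the reduction result you cite is Proposition 2.5 in the paper's numbering, not 2.4.)
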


From Proposition 2.5 and Corollary 2.7, the reduced groupoid $C^*$-algebra $C_r^*(X\rtimes P)$ of $X\rtimes P$ is Morita equivalent to the reduced crossed product $C^*$-algebra $C(\widetilde{X})\rtimes_{\widetilde{\theta}} G$ associated to $ G\curvearrowright_{ \tilde{\theta}}\widetilde{X} $, and these two $C^*$-algebras are isomorphic when $\theta$ is a homeomorphism action. In the rest of this section, we characterize  the relationship among $C_r^*(X\rtimes P)$,  the $C^*$-algebra from the dual action $P\curvearrowright_{\alpha} C(X)$ and partial action of $G$ on $C(X)$ given by  $P\curvearrowright_{\theta} X$.

We define $l^2(G,C(X))$ to be the set of all mapping $\xi$ from $G$ into $C(X)$ such that $\sum_{g\in G}|\xi(g)|^2$ converges in $C(X)$. Then it is a (right) Hilbert $C(X)$-module under the following operations:
$$(\xi f)(g)=\xi(g)f,\,\, <\xi,\eta>=\sum_{g\in G}\xi(g)^{\ast}\eta(g)$$
for  $f\in C(X), \xi,\eta\in l^2(G,C(X)), g\in G$. Similarly, we have the  Hilbert $C(X)$-module $E:\,=l^2(P,C(X))$ and let $\mathcal{L}(E)$ be the $C^{*}$-algebra of all adjointable operators on $E$.

Define representations
$\pi:\, f\in C(X)\rightarrow \pi(f)\in \mathcal{L}(E)$  and $v:\, m\in P\rightarrow v_m\in \mathcal{L}(E)$ by
$$\pi(f)(\xi)(m):=\alpha_{m}(f)\xi(m),\,\,\, v_m(\xi)(n):=\widetilde{\xi}(nm^{-1})\,\,\,\mbox{ for $\xi\in \mathcal{L}(E)$, $m,n\in P$},$$
where $\widetilde{\xi}\in l^2(G,C(X))$ is given by $\widetilde{\xi}(g)=\left\{\begin{array}{ll} \xi(g), & \mbox{ for $g\in P$}\\ 0, & \mbox{ for otherwise} \end{array}\right.$  for $g\in G$.
Then  $v_m^*\xi(n)=\xi(nm)$ for $m,n\in P$, $\xi\in \mathcal{L}(E)$. One can check that $v_e=I$, $v_m$ is an isometry, $v_mv_n=v_{nm}$ and $\pi(f)v_m=v_m\pi(\alpha_m(f))$ for $f\in C(X), m, n\in P$.
The $C^*$-algebra generated by $\{\pi(f), v_m:\, f\in C(X), m\in P\}$ in $\mathcal{L}(E)$ is the reduced crossed product associated with $P\curvearrowright_{\alpha} C(X)$, denoted by $C(X)\rtimes_{r} P$.

To simplify symbol, write $\mathcal{G}=X\rtimes P$. Let $l^{2}(\mathcal{G})$ be a Hilbert right $C(X)$-module by the completion of $C_{c}(\mathcal{G})$ under the following operations:
$$(\xi f)(x,g)=\xi(x,g)f(x)\; \mbox{ for }\xi\in C_{c}(\mathcal{G}), f\in C(X), (x,g)\in \mathcal{G}.$$
$$<\xi,\eta>(x)=\sum_{g\in G,(x,g)\in \mathcal{G}}   \overline{\xi(x,g)} \eta(x,g) \; \mbox{ for } \xi,\eta\in C_{c}(\mathcal{G}) ,x\in X.$$
$$\|\xi\|=\sup_{x\in X} ( \sum_{g\in G,(x,g)\in \mathcal{G}}|\xi(x,g)|^{2} ) ^{\frac{1}{2}} \; \mbox{ for } \xi \in C_{c}(\mathcal{G}).$$

Let $ \mathcal{L}(l^{2}(\mathcal{G})) $ be the $C^{*}$-algebra of all adjointable operators on $l^{2}(\mathcal{G})$. Define the representation $\widetilde{\pi}$ of $C_{c}( \mathcal{G})$ into $ \mathcal{L}(l^{2}(\mathcal{G})) $ by $\widetilde{\pi}: f\in C_{c}( \mathcal{G})\rightarrow\widetilde{\pi}(f)\in \mathcal{L}(l^{2}(\mathcal{G}))$:
$$(\widetilde{\pi} ( f)\xi)(x,g)= \sum_{ (x,h)\in \mathcal{G}}f(u(x,g),g^{-1}h)\xi(x,h) \; \mbox{ for } (x,g)\in \mathcal{G}.$$
Then the reduced groupoid $C^*$-algebra, $ C^{*}_{r}( \mathcal{G}) $, of $\mathcal{G}$ is the $C^*$-algebra generated by $\widetilde{\pi}( C_{c}( \mathcal{G}))$ in $ \mathcal{L}(l^{2}(\mathcal{G}))$ (\cite{article.1}).

\begin{lemma} \label{L1}
	Let $ P\curvearrowright_{\theta} X $ be an injective action. For $g\in G$, let $X_g=\{x\in X:\, (x,g)\in X\rtimes P\}$ and $U_g=X_g\times \{g\}$. Then the characteristic function, denoted by $u_g$, on  $U_g$ is in $C_c(X\rtimes P)$. Moreover, the following statements hold:
	\begin{enumerate}
		\item[(i)] $u_e$ is the identity element in $C_c(X\rtimes P)$, $u_a^*$ is an isometry and $u_au_b=u_{ab}$ for $a,b\in P$;
		\item[(ii)] for $g\in G$, if $g=ab^{-1}$ for $a,b\in P$, then $u_g=u_au_b^*$ and  $u_g^*=u_{g^{-1}}$;
		\item[(iii)] for $f\in C(X)$ and $g\in G$, we have $u_gf=V_g(f) u_g$, where for $x\in X$,
		
		$V_g(f)(x)=\left\{\begin{array}{ll} f(u(x,g)), & \mbox{ if $x\in X_g$}
			\\ 0 , & \mbox{ for otherwise.}\end{array}\right.$
		
		\item[(iv)] $u_gu_{g^{-1}}=\chi_{X_g}\in C(X)$ and $u_gfu_{g^{-1}}=V_g(f)u_gu_{g^{-1}}$.
	\end{enumerate}
	
	Hence $C_c(X\rtimes P)=span\{fu_g:\,\, f\in C(X), g\in G\}$.
	
\end{lemma}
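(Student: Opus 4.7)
The plan is to verify first that $u_g\in C_c(X\rtimes P)$, then to read off items (i)--(iv) from the convolution and involution formulas on the \'etale groupoid $\mathcal{G}=X\rtimes P$, and finally to deduce the spanning statement from finiteness of the $G$-projection of a compact set.

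The only nontrivial preliminary is that $X_g$ is clopen in $X$ (hence compact). Writing $g=ab^{-1}$ with $a,b\in P$, the definition of $Q_x$ gives $X_g=\theta_a^{-1}(\theta_b(X))$. The standing assumption of Remark \ref{R1} makes $\theta_b(X)$ open, while compactness of $X$ and continuity of $\theta_b$ make it closed; continuity of $\theta_a$ then yields $X_g$ clopen. Since $G$ is discrete, $U_g=(X\rtimes P)\cap(X\times\{g\})$ is clopen and compact in $\mathcal{G}$, so $u_g=\chi_{U_g}\in C_c(\mathcal{G})$.

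For (i)--(iv) I would evaluate the \'etale convolution
\begin{equation*}
(\xi*\eta)(x,h)=\sum_{k\in G,\,(x,k)\in\mathcal{G}}\xi(x,k)\,\eta(u(x,k),k^{-1}h)
\end{equation*}
and the involution $\xi^{*}(x,h)=\overline{\xi(u(x,h),h^{-1})}$ at a fixed $(x,h)\in\mathcal{G}$. Lemma 2.2 supplies $u(x,a)=\theta_a(x)$ and, since $a\in P$ always produces a valid pair, $X_a=X$; these two facts collapse each sum to a single term. Then $u_e$ is the identity (only $k=e$ contributes), $u_au_b=u_{ab}$ and $u_au_a^{*}=u_e$ (establishing isometry of $u_a^{*}$) for (i); for (ii), $(u_au_b^{*})(x,h)$ is forced to be nonzero exactly when $a^{-1}h=b^{-1}$ and $\theta_a(x)\in\theta_b(X)$, i.e.\ when $h=g$ and $x\in X_g$, giving $u_au_b^{*}=u_g$, after which $u_g^{*}=u_bu_a^{*}=u_{g^{-1}}$ from $g^{-1}=ba^{-1}$; for (iii), viewing $f\in C(X)$ as supported on the unit space, $(u_g*f)(x,g)=\chi_{X_g}(x)f(u(x,g))$ matches $V_g(f)u_g$, with $V_g(f)\in C(X)$ precisely because $X_g$ is clopen and $u(\cdot,g)$ is continuous on it; and (iv) follows from (iii) together with $u_gu_{g^{-1}}=\chi_{X_g}$, which is immediate once one notes that $(x,g)\in\mathcal{G}$ forces $(u(x,g),g^{-1})\in\mathcal{G}$.

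For the span assertion, any $\xi\in C_c(\mathcal{G})$ has compact support whose projection to the discrete group $G$ is compact, hence finite, say $F\subseteq G$; defining $f_g(x):=\xi(x,g)$ on the clopen set $X_g$ and $0$ off it produces $f_g\in C(X)$, and a one-line computation gives $\xi=\sum_{g\in F}f_gu_g$. The main obstacle --- and essentially the only point beyond routine bookkeeping --- is the clopenness of $X_g$, which is exactly where the standing openness assumption on $\theta_a(X)$ enters; once this is in hand, each identity is a short convolution computation.
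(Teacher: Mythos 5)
Your proof is correct and follows essentially the same route as the paper's (whose argument for (i)--(iv) is simply ``by calculation''); you supply the computations explicitly, and your observation that $X_g=\theta_a^{-1}(\theta_b(X))$ is clopen is exactly the point that makes $u_g$ continuous and $V_g(f)\in C(X)$. The only (cosmetic) divergence is in the spanning statement, where the paper invokes a partition of unity subordinate to $\bigcup_i U_{g_i}$ while you exploit directly that the $U_g$ are pairwise disjoint and clopen, which is a slightly cleaner way to obtain the decomposition $\xi=\sum_{g\in F}f_gu_g$.
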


\begin{proof} Note that for $g\in G$, $U_g$ is an open and compact subset of $X\rtimes P$, thus $u_g\in C_c(X\rtimes P)$. By calculation, we can check the properties stated in the lemma. For $\xi\in C_c(X\rtimes P)$, there exist $g_1,g_2, \cdots, g_n\in G$ such that the support $supp(\xi)$ of $\xi$ is contained in $\cup_{i=1}^nU_{g_i}$. By a partition of unity, we have $\xi=\sum_{i=1}^n\xi_i$ for $\xi_i\in C_c(X\rtimes P)$ and $supp(\xi_i)\subseteq U_{g_i}$. Let $$f_i(x)=\left\{\begin{array}{ll} \xi_i(x,g_i), & \mbox{ if $x\in X_{g_i}$}
		\\ 0, & \mbox{ for otherwise}\end{array}\right.$$ for $x\in X$. Then $f_i\in C(X)$ and $\xi_i=f_iu_{g_i}$ for each $i$. Thus $C_c(X\rtimes P)=span\{fu_g:\,\, f\in C(X), g\in G\}$.

\end{proof}

\begin{theorem}\label{T1}
	Let $\mathcal{M}$ be the closure of the set $\{\xi\in C_c(X\rtimes P):\, \xi(x,g)=0 \mbox{ if $g\notin P$}\}$ in $l^2(X\rtimes P)$ and $Q$ be the projection from $l^2(X\rtimes P)$ onto $\mathcal{M}$. Then $\mathcal{M}$ is isomorphic to $l^2(P, C(X))$, and $C(X)\rtimes_{r}P$ is isomorphic to the $C^*$-algebra generated by $QC_r^*(X\rtimes P)Q$ in $\mathcal{L}(l^2(X\rtimes P))$.
\end{theorem}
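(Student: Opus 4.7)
The plan is to establish the theorem in two stages: first identify $\mathcal{M}$ with $l^{2}(P,C(X))$ as Hilbert $C(X)$-modules, then verify that conjugation by this identification sends the compressed algebra onto $C(X)\rtimes_{r}P$.

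For the first stage, I note that any $\xi$ in the defining dense subset of $\mathcal{M}$ has compact support contained in $\bigcup_{a\in F}(X\times\{a\})$ for some finite $F\subseteq P$, and by Lemma \ref{L1} has a unique expression $\xi=\sum_{a\in F}f_{a}u_{a}$ with $f_{a}\in C(X)$. The inner product formula on $l^{2}(X\rtimes P)$ reduces to $\langle\xi,\xi\rangle(x)=\sum_{a\in F}|f_{a}(x)|^{2}$, matching the corresponding inner product of $(f_{a})\in c_{c}(P,C(X))\subseteq l^{2}(P,C(X))$. Hence $\xi\mapsto(f_{a})$ is a $C(X)$-linear isometry on dense subsets, and extends by continuity to a Hilbert $C(X)$-module isomorphism $\Psi\colon\mathcal{M}\to l^{2}(P,C(X))$.

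For the second stage, three direct computations from the convolution formula for $\widetilde{\pi}$ govern everything. First, $(\widetilde{\pi}(f)\xi)(x,g)=f(u(x,g))\xi(x,g)$ for $f\in C(X)$ is a pointwise multiplication in the group coordinate, so $\widetilde{\pi}(f)$ commutes with $Q$ and $\Psi Q\widetilde{\pi}(f)Q\Psi^{-1}=\pi(f)$, using $u(x,g)=\theta_{g}(x)$ for $g\in P$. Second, $(\widetilde{\pi}(u_{a})\xi)(x,g)=\xi(x,ga)$ for $a\in P$ on the appropriate domain, so $\Psi Q\widetilde{\pi}(u_{a})Q\Psi^{-1}=v_{a}^{*}$, and dually $\Psi Q\widetilde{\pi}(u_{a}^{*})Q\Psi^{-1}=v_{a}$. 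Third, the same computation shows that $\widetilde{\pi}(u_{a}^{*})$ preserves $\mathcal{M}$ while $\widetilde{\pi}(u_{a})$ preserves $\mathcal{M}^{\perp}$. Exploiting these one-sided invariances, together with the factorization $u_{g}=u_{a}u_{b}^{*}$ from Lemma \ref{L1} for $g=ab^{-1}$, I would establish the telescoping identity
$$Q\widetilde{\pi}(fu_{g})Q=(Q\widetilde{\pi}(f)Q)(Q\widetilde{\pi}(u_{a})Q)(Q\widetilde{\pi}(u_{b}^{*})Q)$$
for each generator $fu_{g}$ of $C_{r}^{*}(X\rtimes P)$; under $\Psi$ this conjugates to $\pi(f)v_{a}^{*}v_{b}\in C(X)\rtimes_{r}P$.

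Combining both directions, $\Psi QC_{r}^{*}(X\rtimes P)Q\Psi^{-1}\subseteq C(X)\rtimes_{r}P$ (from the telescoping identity and the density of $\mathrm{span}\{fu_{g}\}$ in $C_{r}^{*}(X\rtimes P)$), while the generators $\pi(f),v_{a},v_{a}^{*}$ of $C(X)\rtimes_{r}P$ all appear as images of compressions; passing to the generated $C^{*}$-algebra on each side yields the desired isomorphism. The hard part will be justifying the telescoping identity: $\widetilde{\pi}(u_{a})$ and $\widetilde{\pi}(u_{b}^{*})$ interact with $\mathcal{M}$ asymmetrically, so inserting $Q$'s between them must be done via a one-sided absorption argument based on $(1-Q)\widetilde{\pi}(u_{b}^{*})Q=0$ and $Q\widetilde{\pi}(u_{a})(1-Q)=0$, or alternatively verified by a direct pointwise computation on $\mathcal{M}$.
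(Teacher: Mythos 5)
Your proposal is correct and follows essentially the same route as the paper: the paper's module isomorphism $\Lambda$ (packaged there as an isometry $U$ with $Q=UU^{*}$ and $\Phi(T)=UTU^{*}$) is your $\Psi$, and the generators are matched identically ($\pi(f)\leftrightarrow Q\widetilde{\pi}(f)Q$, $v_{m}\leftrightarrow Q\widetilde{\pi}(u_{m}^{*})Q$). Your ``telescoping identity'' is exactly the paper's pair of corner identities $Q\widetilde{\pi}(u_{m})\widetilde{\pi}(u_{n}^{*})Q=Q\widetilde{\pi}(u_{m})Q\widetilde{\pi}(u_{n}^{*})Q$ and $Q\widetilde{\pi}(f)\widetilde{\pi}(u_{g})Q=Q\widetilde{\pi}(f)Q\widetilde{\pi}(u_{g})Q$, justified by the same one-sided invariance of $\mathcal{M}$ that you identify.
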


\begin{proof} We use the notation in Lemma 2.8 and before. Obviously, $\mathcal{M}$ is a (right) $C(X)$-submodule of $
	l^{2}(\mathcal{G} )$. Define
	$$\Lambda: \mathcal{M}\rightarrow l^2(P, C(X)),\;\; \Lambda(\zeta)(m)(x)=\zeta(x,m),$$ for $\zeta\in \mathcal{M}$, $m\in P$ and $x\in X$. Then $\Lambda$ is a bijective bounded $C(X)$-linear mapping with inverse 	\begin{equation*}
		\Lambda^{-1}(\varepsilon)(x,g)=\begin{cases}  \varepsilon(g)(x),& \text{if}\; g\in P, \\0, & \text{for otherwise},\end{cases} \text{for }\;\varepsilon\in l^2(P, C(X)), (x,g)\in \mathcal{G}.
	\end{equation*}
	Moreover, for $ \zeta_{1},\zeta_{2}\in  \mathcal{M} $, one can check that $<\zeta_{1},\zeta_{2}>(x)=<\Lambda\zeta_{1},\Lambda\zeta_{2}>(x)$ for each $x\in X$, then 	$\mathcal{M}$ is isomorphic to $l^2(P, C(X))$.
	
	Define
	$$W(\eta)(m)(x)=\eta(x,m),\;\text{for}\;\eta\in C_{c}(\mathcal{G}),  m\in P,x\in X,$$ and
	\begin{equation*}
		U(\xi)(x,g)=\begin{cases}  \xi(g)(x),& \text{if}\; g\in P, \\0, & \text{for otherwise},\end{cases} \text{for }\;\xi\in  C_{c}(P,C(X)), (x,g)\in \mathcal{G}.
	\end{equation*}
	
	Then $W$ and $U$ can be extended to operators in $\mathcal{L}(l^{2}(\mathcal{G} ),l^{2}(P,C(X))) $  and  $ \mathcal{L}(l^{2}(P,C(X)),l^{2}( \mathcal{G}) ) $, and if we use the same symbols to denote their extensions then $ U^{*}=W $ and $U^*U=id$, the identity element in $ \mathcal{L}(l^{2}(P,C(X))) $. An easy calculation confirms that   $ U\pi(f)=\widetilde{\pi}(f)U $ for $f\in C(X)$, and $Uv_m=\widetilde{\pi}(u_{m}^*)U$ for   $m\in P$.
	
	Write $Q=UU^*$. Define the map $\Phi: \mathcal{L}(l^{2}(P, C(X)))\rightarrow \mathcal{L}(l^{2}(\mathcal{G}))$ by  $$\Phi(T)=UTU^*.$$
	Then $\Phi$ is an injective $\ast$-homomorphism, and $\Phi(\pi(f))=Q\widetilde{\pi}(f)Q$, $\Phi(v_m)=Q\widetilde{\pi}(u_m^*)Q$ for $f\in C(X)$, $m\in P$. One can check that $Q\widetilde{\pi}(u_m)\widetilde{\pi}(u_n^*)Q=Q\widetilde{\pi}(u_m)Q\widetilde{\pi}(u_n^*)Q$,  $Q\widetilde{\pi}(f)\widetilde{\pi}(u_g)Q=Q\widetilde{\pi}(f)Q\widetilde{\pi}(u_g)Q$ for each $f\in C(X)$, $m,n\in P$ and $g\in G$. Thus it follows from Lemma 2.8 that $\Phi(C(X)\rtimes P)$ is just the $C^*$-algebra generated by $QC_r^*(X\rtimes P)Q$ in $\mathcal{L}(l^{2}(\mathcal{G}))$.

\end{proof}

We adopt notations in Lemma 2.8 and define a mapping $\hat{\alpha}_{g}: C(X_{g^{-1}})\rightarrow C(X_{g})$ as follows:     $$\hat{\alpha}_{g}(f)(x)=f(u(x,g)),\;\;  \text{for}\; f\in C(X_{g^{-1}}) ,  x\in X_{g}.$$
The preceding descriptions imply that $\hat{\alpha}_{g}$ is an isomorphism. Then $\{\hat{\alpha}_{g}\}_{g\in G}$ defines a partial action of $G$ by partial isomorphism of $C(X)$, and $(C(X), G, \hat{\alpha}) $ is a partial $C^{*}$-dynamical system in the sense of \cite{article.7}.

Consider now the Hilbert (right) $C(X)$-submodule $F=\{ \xi\in l^{2}(G,C(X)):\xi(g)\in C(X_{g})\}$. Denoted by $\mathcal{L}(F)$ be the $C^{*}$-algebra of all adjointable operators on $F$. Define the representation $ \tau:f\in C(X) \rightarrow\tau(f)\in \mathcal{L}(F)$ and $v:g\in G\rightarrow v_{g}\in \mathcal{L}(F)$ by
$$\tau(f)\xi(g)=\hat{\alpha}_{g}(f)\xi(g)|_{X_{g}},\; v_{h}\xi(g)=\xi(gh)|_{X_{g}\cap X_{gh}}, \;\text{for}\;\xi\in \mathcal{L}(F), g,h\in G.$$ Then $v_{g}$ is a partial isometry on $F$ with initial  space $[\tau(C(X_{g^{-1}}))F]$ and final  space $[\tau(C(X_{g}))F]$ such that
\begin{enumerate}
	\item[(i)] $v_{g}\tau(f)v_{g^{-1}}=\tau(\hat{\alpha}_{g}(f)) $ for $f\in C(X_{g^{-1}})$;
	\item[(ii)] $ \tau(f)[v_{g}v_{h}-v_{gh}] =0$ for $f\in C(X_{g})\cap C(X_{gh})$;
	\item[(iii)] $ v_{g}^{*}=v_{g^{-1}} $.

\end{enumerate}
Hence $(\tau,v,F)$ is a covariant representation of $(C(X), G, \hat{\alpha}) $.

The  reduced partial crossed product, denoted by  $C(X)\rtimes^{\widehat{\alpha}}_{r}G$, associated with $(C(X), G, \hat{\alpha}) $ is defined as the $C^*$-algebra generated by $\{\tau(f), v_g:\, f\in C(X), g\in G\}$ in $\mathcal{L}(F)$.

\begin{theorem}
	$C^{*}_{r}(  X\rtimes P)  $ is isomorphic to $C(X)\rtimes^{\widehat{\alpha}}_{r}G$.
\end{theorem}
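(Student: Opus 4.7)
The strategy is to build an explicit unitary $U\colon l^2(X\rtimes P)\to F$ of right Hilbert $C(X)$-modules and then observe that conjugation by $U$ carries the generating set $\widetilde{\pi}(C_c(X\rtimes P))$ bijectively onto the $*$-algebra generated by $\{\tau(f),v_g:f\in C(X),g\in G\}$. Since conjugation by a Hilbert-module unitary is an injective $*$-homomorphism, this will deliver the required isomorphism of reduced $C^*$-algebras.

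First, writing $\mathcal{G}:=X\rtimes P$, I would define $U$ on $C_c(\mathcal{G})$ by
\[
(U\xi)(g)(x)=\xi(x,g),\qquad g\in G,\; x\in X_g,
\]
extending by zero on $X\setminus X_g$ so that $(U\xi)(g)\in C_0(X_g)\subseteq C(X_g)$. Comparing the $C(X)$-valued inner product on $l^2(\mathcal{G})$ (a sum over $\{h:(x,h)\in\mathcal{G}\}$) with that on $F$ (a sum over all $g\in G$, with each $(U\eta)(g)$ supported on $X_g$) shows the two agree pointwise in $x$, so $U$ is a $C(X)$-linear isometry. Surjectivity onto $F$ is immediate from the explicit inverse $(V\eta)(x,g)=\eta(g)(x)$, whence $U$ extends uniquely to a unitary isomorphism of Hilbert $C(X)$-modules.

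Next, by Lemma \ref{L1} the space $C_c(\mathcal{G})$ is spanned by the elements $fu_g$ with $f\in C(X)$ and $g\in G$, so to pass from $U$ to an isomorphism of $C^*$-algebras it is enough to verify
\[
U\widetilde{\pi}(f)=\tau(f)\,U \qquad\text{and}\qquad U\widetilde{\pi}(u_g)=v_g\,U
\]
for each $f\in C(X)$ and each $g\in G$. For the first identity, viewing $f$ as a function on $\mathcal{G}$ supported in the unit space collapses the convolution sum in $\widetilde{\pi}(f)$ to the single term $f(u(x,g))\xi(x,g)$, which equals $\hat{\alpha}_g(f)(x)\,(U\xi)(g)(x)=(\tau(f)U\xi)(g)(x)$ by the very definition of $\hat{\alpha}_g$. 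For the second, $u_g$ is the characteristic function of $X_g\times\{g\}$, and a direct computation shows $(\widetilde{\pi}(u_g)\xi)(x,h)=\xi(x,hg)$ precisely when $x\in X_h\cap X_{hg}$ and zero otherwise, which matches $(v_g U\xi)(h)(x)=(U\xi)(hg)(x)|_{X_h\cap X_{hg}}$.

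Finally, the map $\Psi(T):=UTU^*$ from $\mathcal{L}(l^2(\mathcal{G}))$ to $\mathcal{L}(F)$ is an injective $*$-homomorphism, and the two intertwining identities give $\Psi(\widetilde{\pi}(f))=\tau(f)$ and $\Psi(\widetilde{\pi}(u_g))=v_g$. Restricting $\Psi$ to $C^*_r(\mathcal{G})$ thus yields an injective $*$-homomorphism onto the closure of the $*$-algebra generated by $\{\tau(f),v_g\}$, which by definition is $C(X)\rtimes^{\widehat{\alpha}}_r G$. The main obstacle I anticipate is the bookkeeping behind the second intertwining identity: one has to verify that $u(x,h)\in X_g$ is equivalent to $x\in X_{hg}$, align the resulting domain restrictions with the partial-action convention $v_h\xi(g)=\xi(gh)|_{X_g\cap X_{gh}}$, and cross-check that $\widetilde{\pi}(u_g)^*=\widetilde{\pi}(u_{g^{-1}})$ from Lemma \ref{L1}(ii) is compatible with $v_g^*=v_{g^{-1}}$. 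Once this index calculus is pinned down, the extension from $C_c(\mathcal{G})$ to the norm closures is routine.
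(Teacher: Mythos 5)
Your proposal is correct and follows essentially the same route as the paper: both construct the unitary $l^2(X\rtimes P)\to F$ given by $\xi\mapsto\bigl(g\mapsto \xi(\cdot,g)|_{X_g}\bigr)$, conjugate by it, and check that $\widetilde{\pi}(f)\mapsto\tau(f)$ and $\widetilde{\pi}(u_g)\mapsto v_g$, which by Lemma \ref{L1} suffices. The domain bookkeeping you flag (that $x\in X_h$ and $u(x,h)\in X_g$ is equivalent to $x\in X_h\cap X_{hg}$, via the cocycle identity $u(u(x,h),g)=u(x,hg)$) does go through, and the paper leaves it as "by calculation."
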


\begin{proof}
	Define 	\begin{equation*} \Phi(\eta)(g)(x)=\begin{cases} \eta (x,g) & \text{if}\; x\in X_{g},\\ 0,& \text{for otherwise},\end{cases} \text{for }\;\eta\in  C_{c}(\mathcal{G}),g\in G, \text{and}\;  x\in X.\end{equation*}
	Then $\Phi$ can be extended to operator in $\mathcal{L}(l^{2}(\mathcal{G}), F)$, and we use the same symbol to denote  its extension. Moreover, $\Phi$ is an adjointable unitary operator in $\mathcal{L}(l^{2}(\mathcal{G}), F)$ with $\Phi^{*}(\xi)(x,g)=\xi(g)(x)$ for  $\xi\in F,(x,g)\in \mathcal{G}$.

	Define $\Psi: T\in \mathcal{L}(l^{2}(\mathcal{G}))\rightarrow \Psi(T)\in  \mathcal{L}(F)$ by $$\Psi(T)=\Phi T\Phi^{*}.$$ Then $\Psi$  is an $*$-isomorphism, and by calculation, $\Psi(\widetilde{\pi}(f))=\tau(f),\Psi(\widetilde{\pi}(u_{g}))=v_{g}$, thus $C^{*}_{r}(  X\rtimes P)  $ is isomorphic to $C(X)\rtimes^{\widehat{\alpha}}_{r}G$.

\end{proof}

\section{ Continuous orbit equivalence}
Let $ P\curvearrowright_{\theta} X $ be an injective action and $ G $ a countable group containing
$ P $ as in Section 2. Define $$ x\sim_{\theta} y \Leftrightarrow \exists g\in Q_{x}\; \text{such that}\; y=u(x,g).$$ Then $  \sim_{\theta}   $ is an equivalent relation on $X$.  We denote by $[x]_{\theta} $ the equivalence class of $x$, i.e., $ [x]_{\theta} :=\{u(x,g):g\in Q_{x}\} .$

Given two injective actions $  P\curvearrowright_{\theta} X  $ and  $S\curvearrowright_{\rho} Y $, we let $ G $ and $ H $ be two related discrete groups satisfying that $ P  \subseteq G $, $ S \subseteq H  $ and    the assumption in
Section 2.

\begin{definition} Let  $   P\curvearrowright_{\theta} X  $ and  $S\curvearrowright_{\rho} Y  $  be two injective actions.
	\begin{enumerate}
		\item[(i)]
		We say they   are \emph{ conjugate} if there exist a homeomorphism $\varphi:\, X\rightarrow Y $ and a semi-group isomorphism $\alpha:P\rightarrow S$ such that $\varphi\theta_{m}= \rho_{\alpha(m)}\varphi$ for each $m\in P$.
		\item[(ii)]
		We say they   are \emph{ orbit equivalent} if there exists a homeomorphism $\varphi:\, X\rightarrow Y $ such that $\varphi([x ]_{\theta}) =[\varphi(x) ]_{\rho}$ for $x\in X$.
	\end{enumerate}
\end{definition}

If $   P\curvearrowright_{\theta} X  $ and  $S\curvearrowright_{\rho} Y  $ are  orbit equivalent via a homeomorphism $ \varphi $,
then for each $x\in X, g\in Q_{x}$, there exists $h\in Q_{\varphi(x)}$ (depending on $x$ and $g$ ) such that $ 	\varphi(u(x,g))=u(\varphi(x),h) $. Symmetrically, for each $y\in Y, h\in Q_{y}$, there exists $g\in Q_{ \varphi^{-1}(y)} $ (depending on $y$ and $h$) such that  $ 	\varphi^{-1}(u(y,h))=u(\varphi^{-1}(y),g) $.  Note  that $\cup_{x\in X}\{x\}\times Q_{x}=X\rtimes P$,   we have the following continuous version of
orbit equivalence.
\begin{definition}\label{De1}  We say two injective actions  $   P\curvearrowright_{\theta} X  $ and  $S\curvearrowright_{\rho} Y $  are \emph{continuously orbit equivalent} if there exist  a homeomorphism $\varphi:\, X\rightarrow Y $, continuous mappings
	$a :\; X\rtimes   P\rightarrow H$    and  $ b  :\; Y\rtimes  S\rightarrow G $ such that
	\begin{equation}\label{e1}
		\varphi(u(x,g))=u(\varphi(x),a(x,g)) \;\; \text{for} \; (x,g)\in X\rtimes   P,
	\end{equation}
	\begin{equation}\label{e2}
		\varphi^{-1}(u(y,h))=u(\varphi^{-1}(y),b(y,h))\;\; \text{for} \;  (y,h)\in  Y\rtimes  S.
	\end{equation}

\end{definition}
\begin{proposition}
	If two injective actions   $   P\curvearrowright_{\theta} X  $ and  $S\curvearrowright_{\rho} Y $  are conjugate, then they are continuously orbit equivalent.
\end{proposition}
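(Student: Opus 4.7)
The plan is to build $a$ and $b$ directly from the semi-group isomorphism $\alpha$ witnessing conjugacy, after first extending $\alpha$ to a group isomorphism between $G$ and $H$.

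First I would extend $\alpha:P\to S$ to a group isomorphism $\tilde\alpha:G\to H$. Since $P$ is a right Ore sub-semigroup of $G$, every $g\in G$ can be written as $g=ab^{-1}$ with $a,b\in P$; define $\tilde\alpha(g)=\alpha(a)\alpha(b)^{-1}\in H$. To check this is well-defined, suppose $ab^{-1}=mn^{-1}$ with $a,b,m,n\in P$. Then $an=mb$ in $P$, so applying $\alpha$ gives $\alpha(a)\alpha(n)=\alpha(m)\alpha(b)$, hence $\alpha(a)\alpha(b)^{-1}=\alpha(m)\alpha(n)^{-1}$ in $H$. A routine computation, using the Ore property to find a common right multiplier in $P$ when multiplying two elements of $G$, shows that $\tilde\alpha$ is a group homomorphism; applying the same construction to $\alpha^{-1}:S\to P$ gives the inverse, so $\tilde\alpha$ is an isomorphism. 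This step is the main obstacle, since everything after it is a bookkeeping exercise, and one has to be careful to use the right-Ore condition consistently.

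Next I would define
\begin{equation*}
a:X\rtimes P\to H,\quad a(x,g)=\tilde\alpha(g),
\qquad
b:Y\rtimes S\to G,\quad b(y,h)=\tilde\alpha^{-1}(h).
\end{equation*}
Continuity is immediate: by Remark~\ref{R1}, for each $(x_0,g_0)\in X\rtimes P$ there is an open neighbourhood of the form $U\times\{g_0\}$, on which $a$ is constantly equal to $\tilde\alpha(g_0)$; the same argument works for $b$.

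Finally I would verify the defining equations (\ref{e1}) and (\ref{e2}). Take $(x,g)\in X\rtimes P$ and write $g=ab^{-1}$ with $a,b\in P$ and $\theta_a(x)=\theta_b(u(x,g))$. Applying $\varphi$ and using the conjugacy relation $\varphi\theta_m=\rho_{\alpha(m)}\varphi$, we get
\begin{equation*}
\rho_{\alpha(a)}(\varphi(x))=\varphi(\theta_a(x))=\varphi(\theta_b(u(x,g)))=\rho_{\alpha(b)}(\varphi(u(x,g))).
\end{equation*}
Since $\tilde\alpha(g)=\alpha(a)\alpha(b)^{-1}$ with $\alpha(a),\alpha(b)\in S$, this shows $\tilde\alpha(g)\in Q_{\varphi(x)}$ and $u(\varphi(x),\tilde\alpha(g))=\varphi(u(x,g))$, which is exactly (\ref{e1}). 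Applying the same argument to the inverse data $(\varphi^{-1},\alpha^{-1})$ yields (\ref{e2}), completing the proof that conjugacy implies continuous orbit equivalence.
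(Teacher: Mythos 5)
Your overall strategy is exactly the paper's: extend $\alpha$ to a group isomorphism $\tilde\alpha:G\to H$ by $\tilde\alpha(ab^{-1})=\alpha(a)\alpha(b)^{-1}$, set $a(x,g)=\tilde\alpha(g)$ and $b(y,h)=\tilde\alpha^{-1}(h)$ (locally constant on neighbourhoods $U\times\{g\}$, hence continuous by Remark \ref{R1}), and verify (\ref{e1}) by applying $\varphi$ to $\theta_a(x)=\theta_b(u(x,g))$. The continuity argument and the verification of (\ref{e1}) and (\ref{e2}) are correct and match the paper's proof.

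However, the step you yourself single out as the main obstacle, the well-definedness of $\tilde\alpha$, is argued incorrectly. From $ab^{-1}=mn^{-1}$ you cannot conclude $an=mb$: the first identity is equivalent to $m^{-1}a=n^{-1}b$, whereas $an=mb$ is equivalent to $m^{-1}a=bn^{-1}$, and these agree only when $b$ and $n$ commute. (The subsequent inference from $\alpha(a)\alpha(n)=\alpha(m)\alpha(b)$ to $\alpha(a)\alpha(b)^{-1}=\alpha(m)\alpha(n)^{-1}$ has the same defect.) Since the paper's own Example 3.7 lives in a non-abelian $ax+b$ group, this is not a harmless abelian simplification. The correct argument must use the Ore condition, exactly as in the proof of Lemma 2.2: since $G=PP^{-1}$, there exist $p,q\in P$ with $bp=nq$; then $ap=ab^{-1}(bp)=mn^{-1}(nq)=mq$, so $ap=mq$ and $bp=nq$ are identities inside $P$, and applying the semigroup isomorphism $\alpha$ gives $\alpha(a)\alpha(b)^{-1}=\alpha(ap)\alpha(bp)^{-1}=\alpha(mq)\alpha(nq)^{-1}=\alpha(m)\alpha(n)^{-1}$. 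With this repair (and the analogous use of common right multiples when checking that $\tilde\alpha$ is multiplicative), your proof goes through and coincides with the one in the paper, which simply asserts that $\beta$ is a well-defined group isomorphism without carrying out this check.
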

\begin{proof}
	Assume that  $   P\curvearrowright_{\theta} X  $ and  $S\curvearrowright_{\rho} Y $  are conjugate by maps $\varphi$ and $\alpha$. For $g\in G$, there exist $a,b\in P$ such that $g=ab^{-1}$.   One can check that  $\beta:g\in G\rightarrow \alpha(a)\alpha(b)^{-1}\in H$ is a well-defined  group isomorphism. Define $a(x,g)=\beta(g),b(y,h)=\beta^{-1}(h)$ for $(x,g)\in X\rtimes P$, $(y,h)\in Y\rtimes S$.

	For each $(x,g)\in X\rtimes P$, there exist $m,n\in P$ such that $g=mn^{-1}$ and $\theta_{m}(x)=\theta_{n}(u(x,g))$. Then $\rho_{\alpha(m)}(\varphi(x))=\varphi(\theta_{m}(x))=\varphi(\theta_{n}(u(x,g)))=\rho_{\alpha(n)}(\varphi(u(x,g)))$, which implies $\varphi(u(x,g))=u(\varphi(x),a(x,g))$. In a similar way, we can show that $\varphi^{-1}(u(y,h))=u(\varphi^{-1}(y),b(y,h))$  for $(y,h)\in  Y\rtimes  S$. Thus $   P\curvearrowright_{\theta} X  $ and  $S\curvearrowright_{\rho} Y $  are  continuously orbit equivalent.
\end{proof}

Following \cite{article.8}, we define the topological freeness as follows:
\begin{definition}
	An injective action  $  P\curvearrowright_{\theta} X  $ is called topologically free if $\{x\in X: \mbox{ $x\neq u(x,g)$ for all $g\in Q_x$ with $g\neq e$}\}$ is dense in $X$.
\end{definition}
One can check that an injective action  $  P\curvearrowright_{\theta} X  $ is topologically free if and only if the groupoid $ X\rtimes P $ is topologically principal.

\begin{lemma}\label{L1}
	In Definition \ref{De1}, if injective actions   $   P\curvearrowright_{\theta} X  $ and  $S\curvearrowright_{\rho} Y $  are topologically free, then
	
	\begin{enumerate}
		
		\item[(i)] mappings $a$ and $b$ are continuous cocycles;
		\item[(ii)]  $b(\varphi(x),a(x,g))=g$, $a(\varphi^{-1}(y),b(y,h))=h$ for $(x,g)\in X\rtimes P $ and $(y,h)\in Y\rtimes S $.
	\end{enumerate}	
\end{lemma}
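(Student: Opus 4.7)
The strategy is to combine topological freeness with continuity of $\varphi$, $a$, and $b$. Whenever the defining relations force an equality of the form $u(z,k_1)=u(z,k_2)$ in one of the transformation groupoids, the composition $(z,k_2)^{-1}(z,k_1)=(u(z,k_2),k_2^{-1}k_1)$ is an isotropy arrow at $u(z,k_2)$; if $u(z,k_2)$ lies in the dense trivial-isotropy locus (which, together with density, characterises topological freeness), then $k_2^{-1}k_1=e$, i.e. $k_1=k_2$. Continuity of the mappings involved propagates the identity from this dense subset to the whole open set where both sides are defined.

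I would first establish (ii), which sets the template. Substituting $(y,h)=(\varphi(x),a(x,g))\in Y\rtimes S$ into \eqref{e2} and combining with \eqref{e1} gives $u(x,g)=\varphi^{-1}(u(\varphi(x),a(x,g)))=u(x,b(\varphi(x),a(x,g)))$, so the two elements $(x,g)$ and $(x,b(\varphi(x),a(x,g)))$ of $X\rtimes P$ share range $x$ and source $u(x,g)$. Since $x\mapsto u(x,g)$ is a (local) homeomorphism from $X_g$ onto an open subset of $X$ (by the étale structure recorded in Remark \ref{R1}), the set of $x\in X_g$ with $u(x,g)$ in the trivial-isotropy locus of $X$ is dense in $X_g$; at such $x$ topological freeness yields $b(\varphi(x),a(x,g))=g$, and continuity of $a$, $b$, $\varphi$ extends the identity to all of $X_g$. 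The symmetric identity $a(\varphi^{-1}(y),b(y,h))=h$ is proved in exactly the same way, with the roles of the two systems exchanged.

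For (i), the normalisation $a(x,e)=e$ (and analogously $b(y,e)=e$) follows by taking $g=e$ in \eqref{e1}, noting $u(x,e)=x$ from Lemma 2.2, and applying the same density/continuity argument. For composable arrows $(x,g),(u(x,g),h)\in X\rtimes P$, applying \eqref{e1} once to $(x,gh)$ and twice iteratively via $u(u(x,g),h)=u(x,gh)$ (again from Lemma 2.2) yields
\[ u(\varphi(x),a(x,gh))=\varphi(u(x,gh))=u(u(\varphi(x),a(x,g)),a(u(x,g),h))=u(\varphi(x),a(x,g)a(u(x,g),h)), \]
where the last step is the composition rule for $u$ applied inside $Y\rtimes S$. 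Topological freeness and the density principle then give the cocycle identity $a(x,gh)=a(x,g)\,a(u(x,g),h)$ on the open set where all terms are defined; the analogous argument handles $b$. Combined with $a(x,e)=e$ and the inverse relation (which is a formal consequence of the cocycle law and normalisation), this exhibits $a$ and $b$ as continuous groupoid cocycles.

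The main technical obstacle is the density step: at each stage one must verify that the set of base points where the trivial-isotropy hypothesis applies is dense in the relevant open domain. This rests on the étale structure established in Remark \ref{R1}, which guarantees that the maps $x\mapsto u(x,k)$ and $\varphi^{\pm 1}$ (and their compositions) are homeomorphisms onto open sets, so preimages of the dense trivial-isotropy loci remain dense.
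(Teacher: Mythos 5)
Your proposal is correct and follows essentially the same route as the paper's proof: both reduce each identity to an equation of the form $u(z,k_1)=u(z,k_2)$ holding on an open set (via \eqref{e1}, \eqref{e2} and the multiplicativity of $u$), and then invoke topological freeness together with local constancy of the $H$- and $G$-valued maps to conclude $k_1=k_2$. The only differences are presentational: you treat (ii) before (i), cite Lemma 2.2 for $u(u(x,g),h)=u(x,gh)$ instead of re-deriving it through Ore decompositions, and make explicit the density step that the paper leaves implicit.
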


\begin{proof}
	(i) For  $(x_{0},g_{1}), (u(x_{0},g_{1}),g_{2})$ in $ X\rtimes P$,   $(x_{0},g_{1}g_{2})\in X\rtimes P$. Choose $s_{i},t_{i}\in S, h_{i}\in H$, $i=1,2,3$, such that $h_{1}=a(x_{0},g_{1})=s_{1}t_{1}^{-1}$, $h_{2}=a(u(x_{0},g_{1}),g_{2})=s_{2}t_{2}^{-1}$ and  $h_{3}=a(x_{0},g_{1}g_{2})=s_{3}t_{3}^{-1}$. From the continuity of $a$  and $u$, there exists an open neighbourhood $U$ of $x_{0}$ such that   $ (u(x,g_{1}),g_{2})\in X\rtimes P$ when $(x,g_{1})\in X\rtimes P$, $a(x,g_{1})=a(x_{0},g_{1})$  and $a(u(x,g_{1}),g_{2})=a(u(x_{0},g_{1}),g_{2})$   for  $x\in U$.
	Then $\varphi(u(x,g_{1}))=u(\varphi(x),h_{1})$,   $\varphi(u(u(x,g_{1}),g_{2}))=u(\varphi(u(x,g_{1})),h_{2})$ and $\varphi(u(x,g_{1}g_{2}))=u(\varphi(x),h_{3})$, these follow that $\rho_{s_{1}}(\varphi(x))=\rho_{t_{1}}(\varphi(u(x,g_{1})))$, $\rho_{s_{2}}(\varphi(u(x,g_{1})))=\rho_{t_{2}}(\varphi(u(u(x,g_{1}),g_{2})))$
	and $\rho_{s_{3}}(\varphi(x))=\rho_{t_{3}}(\varphi(u(x,g_{1}g_{2})))$
	for each $x\in U$.
	Let $t_{1}^{-1}s_{2}=mn^{-1}$, $m,n\in S$. Then $ h_{1}h_{2}= (s_{1}m)(t_{2}n)^{-1}$  and $\rho_{s_{1}m}(\varphi(x))
	=\rho_{t_{2}n}(\varphi(u(u(x,g_{1})),g_{2}))$. Since    $\rho_{s_{1}m}(\varphi(x))=\rho_{t_{2}n}(u(\varphi(x)),h_{1}h_{2})$, we see that $u(\varphi(x),h_{1}h_{2})=u(\varphi(x),h_{3})$  for each $x\in U$. Topological freeness of $S\curvearrowright_{\rho}Y $ implies $h_{3}=h_{1}h_{2}$, i.e., $a(x_{0},g_{1}g_{2})=a(x_{0},g_{1})a(u(x_{0},g_{1}),g_{2})$. In the same way as above, we can show that $ b $ is a cocycle.
	
	(ii)  From equations (\ref{e1}) and (\ref{e2}), one   sees that $u(x,g)= u(x,b(\varphi(x),a(x,g)))$ for $(x,g)\in X\rtimes P$. By the continuity of $ a $ and $ b $,  this equation holds for some open neighbourhood $U$ of $x$. Topological freeness of $ P\curvearrowright_{\theta}X $ implies  $b(\varphi(x),a(x,g))=g$.  In the same way, we can show that  $a(\varphi^{-1}(y),b(y,h))=h$ for $(y,h)\in Y\rtimes S $.

\end{proof}

\begin{theorem}
	Let $P\curvearrowright_{\theta} X  $ and  $S\curvearrowright_{\rho} Y $   be two topologically free injective actions. Then the followings are equivalent:
	\begin{enumerate}
		\item[(i)]   $   P\curvearrowright_{\theta} X  $ and  $S\curvearrowright_{\rho} Y $  are continuously orbit equivalent;
		
		\item[(ii)]	$X\rtimes  P $ and $Y\rtimes S$ are isomorphic as \'{e}tale groupoids;
		\item[(iii)]  There is  a $C^{*}$-isomorphism $\Phi :\; C^{*}_{r}(X\rtimes P) \rightarrow  C^{*}_{r}(Y\rtimes S ) $ such that $\Phi(C(X))=C(Y)$;
		\item[(iv)] 	There is  a $C^{*}$-isomorphism $\Psi:\; C(X)\rtimes^{\widehat{\alpha}}_{r}G \rightarrow  C(Y)\rtimes^{\widehat{\beta}}_{r} H  $ such that $\Psi(C(X))=C(Y)$, where $\widehat{\beta}$ is the partial action of $H$ on $C(Y)$.
	\end{enumerate}	
	Moreover, (ii) $ \Rightarrow$ (i) holds  without the assumption of topological
	freeness.
\end{theorem}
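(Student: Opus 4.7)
The plan is to prove $(i) \Rightarrow (ii) \Rightarrow (i)$ and separately $(ii) \Leftrightarrow (iii) \Leftrightarrow (iv)$, noting that the proof of $(ii) \Rightarrow (i)$ will not use topological freeness and therefore yields the final assertion of the theorem.

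For $(i) \Rightarrow (ii)$, I start from the continuous orbit equivalence data $(\varphi, a, b)$. Lemma 3.5 gives that $a$ and $b$ are continuous cocycles which are mutual inverses in the sense $b(\varphi(x), a(x,g)) = g$ and $a(\varphi^{-1}(y), b(y,h)) = h$. I then define $\Phi : X \rtimes P \to Y \rtimes S$ by $\Phi(x,g) := (\varphi(x), a(x,g))$ with proposed inverse $\Psi(y,h) := (\varphi^{-1}(y), b(y,h))$. The cocycle identity for $a$ together with (\ref{e1}) ensures $\Phi$ lands in $Y \rtimes S$ and respects multiplication; both maps are continuous and mutually inverse by Lemma 3.5(ii), so $\Phi$ is an \'etale groupoid isomorphism. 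Conversely for $(ii) \Rightarrow (i)$ (no freeness needed), given an isomorphism $\Phi : X \rtimes P \to Y \rtimes S$, I restrict to unit spaces to obtain a homeomorphism $\varphi : X \to Y$, then put $a := c_Y \circ \Phi$ and $b := c_X \circ \Phi^{-1}$, where $c_Y, c_X$ are the canonical continuous cocycles of Remark \ref{R1}. Since $\Phi$ preserves the range map, the first coordinate of $\Phi(x,g)$ must be $\varphi(x)$, giving $\Phi(x,g) = (\varphi(x), a(x,g))$; applying the source map to both sides yields (\ref{e1}), and symmetrically (\ref{e2}).

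For $(ii) \Leftrightarrow (iii)$, I will invoke Renault's theorem on Cartan subalgebras for reduced C*-algebras of second-countable Hausdorff topologically principal \'etale groupoids. Topological freeness of the two actions translates into topological principality of $X \rtimes P$ and $Y \rtimes S$ by the observation following Definition 3.4, so both groupoids satisfy Renault's hypotheses. The theorem then equates isomorphism of such groupoids with C*-isomorphism of their reduced groupoid C*-algebras carrying the canonical diagonals $C(X)$ and $C(Y)$ onto one another. The equivalence $(iii) \Leftrightarrow (iv)$ is then immediate from the last theorem of Section 2, since the isomorphism $C^*_r(X \rtimes P) \cong C(X) \rtimes^{\widehat{\alpha}}_r G$ constructed there visibly carries the copy of $C(X)$ sitting inside $C^*_r(X \rtimes P)$ onto the canonical copy of $C(X)$ inside the partial crossed product (one reads this from $\Psi(\widetilde{\pi}(f)) = \tau(f)$ in that proof), and analogously on the $Y$ side.

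The principal technical obstacle will be the careful invocation of Renault's Cartan subalgebra theorem in $(ii) \Leftrightarrow (iii)$: one must verify that $C(X)$ really is a Cartan subalgebra of $C^*_r(X \rtimes P)$ in Renault's sense (needing Hausdorffness, second countability, \'etaleness and topological principality of the groupoid, together with existence of a faithful conditional expectation) and that the groupoid produced by Renault's theorem is genuinely a topological groupoid isomorphism between the two transformation groupoids, not merely a measured or abstract one. The remaining steps reduce to routine bookkeeping once Lemma 3.5 and the last theorem of Section 2 are in hand.
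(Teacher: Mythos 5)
Your proposal is correct and follows essentially the same route as the paper: (i)$\Leftrightarrow$(ii) via Lemma 3.5 and the maps $(x,g)\mapsto(\varphi(x),a(x,g))$, with the converse built from the restriction of the groupoid isomorphism to unit spaces and the canonical cocycles (no freeness needed), and (ii)$\Leftrightarrow$(iii)$\Leftrightarrow$(iv) via Renault's Cartan subalgebra theorem together with Theorem 2.10. Your added care in checking the hypotheses of Renault's theorem (topological principality, second countability, the diagonal being Cartan) is exactly what the paper's terse citation of \cite{article.13} implicitly relies on.
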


\begin{proof}
	(i) $ \Rightarrow $ (ii) Let $\varphi, a $ and $ b $ be three maps implementing the continuous  orbit equivalence of  $   P\curvearrowright_{\theta} X  $ and  $S\curvearrowright_{\rho} Y $.  By Lemma \ref{L1}, maps $X\rtimes P\rightarrow Y\rtimes S, (x,g)\mapsto (\varphi(x),a(x,g))  $ and $Y\rtimes S\rightarrow X\rtimes P, (y,h)\mapsto (\varphi^{-1}(y),b(y,h))$ are continuous groupoid homomorphisms, and they    are inverse to each other. Hence $X\rtimes  P $ and $Y\rtimes S$ are  isomorphic as \'{e}tale groupoids.
	
	(ii) $ \Rightarrow $ (i)  Assume that  $ \Lambda :X\rtimes P\rightarrow Y\rtimes  S   $ is an  isomorphism. Let $ \varphi $ be the restriction of $\Lambda$ to the unit space $X$, and let $a=c\Lambda$, $b=c\Lambda^{-1}$.  Then $\varphi: X\rightarrow Y$ is a homeomorphism, and $a: X\rtimes P\rightarrow  H$, $b: Y\rtimes  S\rightarrow G $ are continuous cocycles. Moreover, $\Lambda(x,g)=(\varphi(x),a(x,g))$, $\Lambda^{-1}(y,h)=(\varphi^{-1}(y),b(y,h))$. Then $$\varphi(u(x,g))=\Lambda(s(x,g))=s(\Lambda(x,g))= u(\varphi(x),a(x,g)),$$
	$$\varphi^{-1}(u(y,h))=\Lambda^{-1}(s(y,h))=s(\Lambda^{-1}(y,h))=u(\varphi^{-1}(y),b(y,h)).$$ Thus  $   P\curvearrowright_{\theta} X  $ and  $S\curvearrowright_{\rho} Y $  are continuously orbit equivalent. This does not use topological freeness.

	The equivalence of (ii), (iii) and (iv)   follows from \cite{article.13} and Theorem 2.10.	
	
\end{proof}

The following example comes from \cite{article.14}.
\begin{example}
	
	Denote by $ \mathbb{Q}_{+}=\{x\in  \mathbb{Q}:x\geq 0\} $, 	  $ \mathbb{Q}_{+}^{*}=\{x\in  \mathbb{Q}:x > 0\} $, $ \mathbb{Q}_{\geq1}=\{x\in  \mathbb{Q}:x\geq 1\} $, and $ \mathbb{N}^{*}=\mathbb{N}\setminus \{0\}$.  Let
	$$G=\left\lbrace  \left[  \begin{array}{cc}
		a	& b \\
		0	& 1
	\end{array}\right] : a\in \mathbb{Q}_{+}^{*},b\in \mathbb{Q}
	\right\rbrace  $$
	be the semi-direct of the additive group $ \mathbb{Q}$ by the multiplication action of  $\mathbb{Q}_{+}^{*}  $. Let
	$$P_{1}=\left\lbrace  \left[  \begin{array}{cc}
		a	& b \\
		0	& 1
	\end{array}\right] : a\in \mathbb{Q}_{\geq1},b\in \mathbb{Q}_{+}
	\right\rbrace ,$$ $$P_{2}=\left\lbrace  \left[  \begin{array}{cc}
		a	& b \\
		0	& 1
	\end{array}\right] : a\in \mathbb{N}^{*},b\in \mathbb{Q}_{+}
	\right\rbrace . $$
	Then $  P_{1}, P_{2} $ are unital semi-groups of $G$ and $G=P_{i}P_{i}^{-1}$, $ i=1,2 $.

	Let $X=\left[ -\infty,0\right] \times \left[ 0,1\right]  $, where $ \left[ -\infty,0\right]  $ is the one-point compactification of $ \left(  -\infty,0\right]  $. For $ \left[  \begin{array}{cc}
		a	& b \\
		0	& 1
	\end{array}\right] \in G  $ and $ (x,y)\in X $,   define $$(x,y)\ast_{\theta}
	\left[  \begin{array}{cc}
		a	& b \\
		0	& 1
	\end{array}\right]
	=(\dfrac{x-b}{a},\dfrac{y}{a}). $$
	Then $ \theta $ is not an action of $G$ on $X$, but $	P_{1} \curvearrowright_{\theta}X  $ and $	P_{2} \curvearrowright_{\theta}X  $ are right injective actions.
	Note that both for $	P_{1} \curvearrowright_{\theta}X  $ and $	P_{2} \curvearrowright_{\theta}X  $,  $ Q_{(x,y)}=\left\lbrace  \left[  \begin{array}{cc}
		a	& b \\
		0	& 1
	\end{array}\right]\in G :   a\geq y,b\geq x
	\right\rbrace $ for  $ (x,y)\in X$, and  for all $ g\in Q_{(x,y)}$ with $g$ is not the identity matrix, $ (x,y)\neq u((x,y), g)$. Then $	P_{1} \curvearrowright_{\theta}X  $ and $	P_{2} \curvearrowright_{\theta}X  $ are all topologically free.
	
	Let  $   \left[  \begin{array}{cc}
		a	& b \\
		0	& 1
	\end{array}\right] $ be in $P_{1}$ or  $P_{2} $  arbitrary,  observe that $X\ast_{\theta}
	\left[  \begin{array}{cc}
		a	& b \\
		0	& 1
	\end{array}\right]
	=\dfrac{1}{a}( [-\infty,-b]\times[0,1]) $ is open in $X$. Then it follows   that    transformation groupoids $  X\rtimes P_{1} $ and $  X\rtimes P_{2} $ are  all   \'{e}tale.
	
\end{example}

\begin{proposition}
	$	P_{1} \curvearrowright_{\theta}X  $ and $	P_{2} \curvearrowright_{\theta}X  $ are continuously orbit equivalent, but they are not conjugate.	Moreover, $  X\rtimes P_{1} $ and $  X\rtimes P_{2} $ are isomorphic as \'{e}tale groupoids.
\end{proposition}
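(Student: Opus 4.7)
My plan is to reduce all three assertions to the explicit description of $Q_{(x,y)}$ asserted in the example above: for both $i=1,2$,
\[Q^{P_i}_{(x,y)}=\Bigl\{\left[\begin{smallmatrix}\alpha & \beta\\0 & 1\end{smallmatrix}\right]\in G:\alpha\geq y,\ \beta\geq x\Bigr\},\]
and $u^{P_i}((x,y),g)=(x,y)\ast_\theta g=\bigl(\tfrac{x-\beta}{\alpha},\tfrac{y}{\alpha}\bigr)$. The inclusion ``$\subseteq$'' for either $i$ is immediate from the formula for $\ast_\theta$, since $\theta_a(x,y)=\theta_b(z,w)\in X$ forces $(z,w)=(x,y)\ast_\theta g\in X$, which happens precisely when $\alpha\geq y$ and $\beta\geq x$. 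The reverse inclusion for $P_1$ is easy: given such $g$, just write $\alpha=\alpha/1$ or $1/(1/\alpha)$ as appropriate and then choose $b_2\in\mathbb{Q}_+$ large enough that $b_1:=\beta+\alpha b_2\geq 0$. For $P_2$ the same argument works after clearing the denominator of $\alpha$ so that $a_1,a_2\in\mathbb{N}^*$; this is the main obstacle I foresee, and once settled it yields $Q^{P_1}_{(x,y)}=Q^{P_2}_{(x,y)}$ together with the equality $u^{P_1}=u^{P_2}$ on the common domain.

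With this in hand, $X\rtimes P_1$ and $X\rtimes P_2$ coincide as subsets of $X\times G$ carrying the same subspace topology, composition and inverse, so the identity map is the desired \'etale groupoid isomorphism. Continuous orbit equivalence is then witnessed directly by $\varphi=\mathrm{id}_X$, $a(x,g):=g$ and $b(y,h):=h$, which satisfy Definition~\ref{De1} tautologically (alternatively one invokes the implication (ii)$\Rightarrow$(i) of the preceding theorem, which is noted to hold without topological freeness).

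For the non-conjugacy claim I argue that no semigroup isomorphism $P_1\to P_2$ can exist, by distinguishing the \emph{atoms} of the two semigroups --- non-identity elements $g$ admitting no factorization $g=hh'$ with $h,h'\neq I$. In $P_2$, the matrix $\left[\begin{smallmatrix}p & 0\\0 & 1\end{smallmatrix}\right]$ is an atom for every prime $p$: any factorization forces $a_1a_2=p$ in $\mathbb{N}^*$ and $a_1b_2+b_1=0$ in $\mathbb{Q}_+$, hence $b_1=b_2=0$ and $\{a_1,a_2\}=\{1,p\}$. In $P_1$ no atoms exist at all: if $a>1$, pick any rational $a_1\in(1,a)$, set $a_2:=a/a_1\in\mathbb{Q}_{>1}$, and split $\left[\begin{smallmatrix}a & b\\0 & 1\end{smallmatrix}\right]=\left[\begin{smallmatrix}a_1 & b\\0 & 1\end{smallmatrix}\right]\left[\begin{smallmatrix}a_2 & 0\\0 & 1\end{smallmatrix}\right]$; if $a=1$ and $b>0$, split $\left[\begin{smallmatrix}1 & b\\0 & 1\end{smallmatrix}\right]=\left[\begin{smallmatrix}1 & b/2\\0 & 1\end{smallmatrix}\right]^2$. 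Hence $P_1\not\cong P_2$ as semigroups, which rules out any conjugacy.
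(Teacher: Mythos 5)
Your proof is correct. For the first two assertions you follow essentially the paper's route: the authors likewise observe that $G=P_1P_1^{-1}=P_2P_2^{-1}$ gives $Q^{P_1}_{(x,y)}=Q^{P_2}_{(x,y)}$ (so $(x,g)\in X\rtimes P_1$ iff $(x,g)\in X\rtimes P_2$), hence the two groupoids coincide and the identity map witnesses both the \'{e}tale isomorphism and, via the implication (ii)$\Rightarrow$(i) of Theorem 3.6 (valid without topological freeness), the continuous orbit equivalence; your explicit verification of the set $Q_{(x,y)}$ and of $u^{P_1}=u^{P_2}$ just fills in what the paper leaves as ``one can check.'' Where you genuinely diverge is the non-conjugacy. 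The paper argues dynamically: a conjugacy implemented by $(\varphi,\alpha)$ carries a point with $Q_{x_0}=P$ to a point with $Q_{\varphi(x_0)}=S$; since $Q_{(0,1)}=P_1$ for the $P_1$-action while no point satisfies $Q_{(x,y)}=P_2$ for the $P_2$-action (every $Q_{(x,y)}$ contains matrices with non-integer upper-left entry), conjugacy fails. You instead prove the stronger, purely algebraic fact that $P_1$ and $P_2$ are not isomorphic as monoids, by exhibiting atoms in $P_2$ (the prime dilations, correctly verified from $a_1a_2=p$ and $a_1b_2+b_1=0$ with $b_i\geq 0$) and showing $P_1$ is atomless; since a semigroup isomorphism between monoids preserves the identity and hence atoms, no $\alpha$ exists at all. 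Your argument buys more — it rules out conjugacy of \emph{any} injective $P_1$-action with \emph{any} injective $P_2$-action, independently of the spaces — while the paper's argument highlights the sets $Q_x$ as conjugacy invariants of the dynamics, which is more in the spirit of the section. Both are valid.
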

\begin{proof}
	
	Since $G=P_{1}P_{1}^{-1}=P_{2}P_{2}^{-1}$, one can check that $ (x,g)\in X\rtimes P_{1}$ if and only if  $(x,g)\in X\rtimes P_{2}$. It is then easy to see that  $  X\rtimes P_{1} $ and   $ X\rtimes P_{2}$  are \'{e}tale groupoid isomorphic and thus 	$	P_{1} \curvearrowright_{\theta}X  $ and $	P_{2} \curvearrowright_{\theta}X  $ are continuously orbit equivalent.
	
	An easy check shows that	 if  injective actions $  P\curvearrowright_{\theta} X  $ and  $S\curvearrowright_{\rho} Y $  are conjugate by homeomorphism $\varphi$ and semi-group isomorphism $\alpha$, assume that there exists $x_{0}\in X$ such that $Q_{x_{0}}=P$. Then $Q_{\varphi(x_{0})}=S$.
	In this case, for $(0,1)\in  X $, note that $Q_{(0,1)}=P_{1}\neq P_{2}$, then   $	P_{1} \curvearrowright_{\theta}X  $ and $	P_{2} \curvearrowright_{\theta}X  $ are   not conjugate.
\end{proof}

In the rest of this section, we discuss the  classification of the injective actions and the associated group actions  in Section 2 up to conjugacy,   no further results have been obtained for their  continuous orbit equivalence.

\begin{proposition}
	If injective actions $ P\curvearrowright_{\theta} X  $ and $S\curvearrowright_{\rho} Y  $ are conjugate, then   $ G\curvearrowright_{\tilde{\theta}} \widetilde{X}  $ and $H\curvearrowright_{\tilde{\rho}} \widetilde{Y}  $ are conjugate.
\end{proposition}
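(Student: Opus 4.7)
The plan is to lift the conjugacy data $(\varphi,\alpha)$ of the semi-group actions to a conjugacy $(\tilde\varphi,\beta)$ between the associated group actions on the quotient spaces. The candidate maps are forced: $\beta:G\to H$ must extend $\alpha$, and $\tilde\varphi:\widetilde X\to\widetilde Y$ must be given on classes by $\tilde\varphi([x,g])=[\varphi(x),\beta(g)]$.

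First I would construct $\beta$. Since $G=PP^{-1}$, every $g\in G$ can be written as $g=ab^{-1}$ with $a,b\in P$, and I set $\beta(g)=\alpha(a)\alpha(b)^{-1}$. The well-definedness is the same argument already used in the proof of Proposition 3.3: if $ab^{-1}=cd^{-1}$, invoke the right Ore property of $P$ to find $m,n\in P$ with $bm=dn$ (hence $am=cn$), apply the semi-group isomorphism $\alpha$, and conclude $\alpha(a)\alpha(b)^{-1}=\alpha(c)\alpha(d)^{-1}$. Symmetrically $\alpha^{-1}$ induces the inverse homomorphism, so $\beta$ is a group isomorphism.

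Next I would show $\tilde\varphi$ is well-defined. Suppose $(x,g)\sim(x',g')$ in $X\times G$, i.e.\ there exist $a,b\in P$ with $gg'^{-1}=ab^{-1}$ and $\theta_a(x)=\theta_b(x')$. Applying $\varphi$ and using $\varphi\theta_m=\rho_{\alpha(m)}\varphi$ gives $\rho_{\alpha(a)}(\varphi(x))=\rho_{\alpha(b)}(\varphi(x'))$, while $\beta(g)\beta(g')^{-1}=\alpha(a)\alpha(b)^{-1}$ by construction of $\beta$. Hence $(\varphi(x),\beta(g))\sim(\varphi(x'),\beta(g'))$ in $Y\times H$, so the formula descends to a map $\tilde\varphi:\widetilde X\to\widetilde Y$. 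The inverse is manufactured from $(\varphi^{-1},\alpha^{-1},\beta^{-1})$ by the same recipe, giving a set-theoretic bijection.

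To upgrade $\tilde\varphi$ to a homeomorphism, I would use the universal property of the quotient topology. The map $(x,g)\mapsto[\varphi(x),\beta(g)]$ from $X\times G$ to $\widetilde Y$ is the composition of $\varphi\times\beta$ (a homeomorphism of $X\times G$ onto $Y\times H$, since $G$ and $H$ are discrete) with the quotient map $\pi_Y:Y\times H\to\widetilde Y$, and is constant on fibres of $\pi_X$; hence it factors through a continuous $\tilde\varphi$. The same argument applied to the inverse $(y,h)\mapsto[\varphi^{-1}(y),\beta^{-1}(h)]$ gives continuity of $\tilde\varphi^{-1}$. Finally, equivariance is immediate from the definitions: for $h\in G$,
\[
\tilde\varphi(\tilde\theta_h([x,g]))=\tilde\varphi([x,gh])=[\varphi(x),\beta(gh)]=[\varphi(x),\beta(g)\beta(h)]=\tilde\rho_{\beta(h)}(\tilde\varphi([x,g])).
\]
So the pair $(\tilde\varphi,\beta)$ witnesses the conjugacy of $G\curvearrowright_{\tilde\theta}\widetilde X$ and $H\curvearrowright_{\tilde\rho}\widetilde Y$.

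The only genuinely delicate point is the well-definedness of $\beta$ and of $\tilde\varphi$, both of which rely essentially on the right Ore condition on $P$ (and the fact that $\alpha$ respects this structure). Everything else is a routine diagram chase once those two checks are in hand.
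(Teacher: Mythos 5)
Your proposal is correct and follows essentially the same route as the paper: the same induced isomorphism $\beta(ab^{-1})=\alpha(a)\alpha(b)^{-1}$, the same descent of $\varphi\times\beta$ to $\tilde\varphi([x,g])=[\varphi(x),\beta(g)]$, and the same symmetric construction of the inverse. The only (cosmetic) difference is in the continuity step, where you invoke the universal property of the quotient via the identity $\tilde\varphi\circ\pi_X=\pi_Y\circ(\varphi\times\beta)$, while the paper verifies continuity of $\tilde\varphi\circ\pi_X$ directly with a sequence argument using discreteness of $G$; both are valid and equivalent in substance.
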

\begin{proof}
	Let $\varphi: X\rightarrow Y$ be a  homeomorphism  and $\alpha: P\rightarrow S$  be a semi-group isomorphism such that  $\varphi(\theta_{m}(x))=\rho_{\alpha(m)}(\varphi(x))$ for    $x\in X$ and $m\in P$.
	For $g\in G$, there exist $a,b\in P$ such that $g=ab^{-1}$. Define $\beta:g\in G\rightarrow \alpha(a)\alpha(b)^{-1}\in H$. One can check that $\beta$ is a well-defined group isomorphism and $(x,g)\sim(y,h)$ in $ \widetilde{X} $ if and only if $ (\varphi(x),\beta(g))\sim( \varphi(y),\beta(h))$ in $  \widetilde{Y}$.  We can therefore define a map
	$$\widetilde{\varphi}:  \widetilde{X}\rightarrow  \widetilde{Y}, \; [x,g] \mapsto [\varphi(x),\beta(g)]$$
	and $\widetilde{\varphi}$ is  bijective  with inverse $ \widetilde{\varphi}^{-1} $, defined by $ \widetilde{\varphi}^{-1}([y,h])=[\varphi^{-1}(y),\beta^{-1}(h)] $.

	To see that $\widetilde{\varphi}$ is continuous,  it suffices to show that   $\widetilde{\varphi}\circ\pi$ is continuous, where $ \pi :X\times G \rightarrow \widetilde{X}$ is the quotient map. Suppose $(x_{n},g_{n})\rightarrow(x,g)$ in $X\times G$. Then   $g_{n}= g$ for large $n$, so we can assume that $g_{n}=g$. Hence $(\varphi(x_{n}),\beta(g))\rightarrow (\varphi(x),\beta(g)) $ in $Y\times H$. Since map $(y,h)\in Y\times H\rightarrow [y,h]\in \widetilde{Y}$ is continuous, we have $[\varphi(x_{n}),\beta(g )] \rightarrow [\varphi(x),\beta(g)] $. Thus $\widetilde{\varphi}$ is continuous. In a similar way, we can show that  $\widetilde{\varphi}^{-1}$ is continuous and thus $\widetilde{\varphi}$ is a homeomorphism.   Finally, for $[x,g]\in \widetilde{X}$ and $h\in G$,
	$$\widetilde{\varphi}(\tilde{\theta}_{h}[x,g])=\widetilde{\varphi}([x,gh])=[\varphi(x),\beta(gh)]= \tilde{\rho}_{\beta(h )}([\varphi(x),\beta(g )])=\tilde{\rho}_{\beta(h )}(\widetilde{\varphi}[x,g]).$$ Hence $ G\curvearrowright_{\tilde{\theta}} \widetilde{X}  $ and $H\curvearrowright_{\tilde{\rho}} \widetilde{Y}  $ are conjugate.
	
\end{proof}

\begin{proposition}
	Let  $P\curvearrowright_{\theta}X$ be a right   action by homeomorphisms as in Remark \ref{R3}, then $G\curvearrowright_{\hat{\theta}} X$  and  $ G\curvearrowright_{ \tilde{\theta}} \widetilde{X}$ are conjugate.
\end{proposition}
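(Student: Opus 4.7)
The plan is to exhibit an explicit conjugating homeomorphism between $X$ and $\widetilde{X}$ and verify that it intertwines the two actions. Remark~\ref{R3} already does most of the bookkeeping: when $\theta$ is by homeomorphisms, $\widetilde{X}=X'=\{[x,e]:x\in X\}$ and $[x,g]=[\hat{\theta}_g(x),e]$ for all $x\in X$, $g\in G$. The natural conjugating map is therefore
$$\varphi:X\to\widetilde{X},\qquad \varphi(x)=[x,e],$$
paired with the identity automorphism of $G$.

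I would first check that $\varphi$ is a homeomorphism. Continuity is immediate, since $\varphi$ is the composition of $x\mapsto(x,e)$ with the quotient map $\pi:X\times G\to\widetilde{X}$. Surjectivity follows from $\widetilde{X}=X'$, and injectivity from the observation recorded just before Remark~\ref{R2} that $[x,e]=[u,e]$ iff $x=u$. Because $X$ is compact Hausdorff and $\widetilde{X}$ is Hausdorff, the continuous bijection $\varphi$ is automatically a homeomorphism. This neatly side-steps what I expect to be the main technical obstacle, namely a direct continuity check on the candidate inverse $[x,g]\mapsto\hat{\theta}_g(x)$, which would otherwise require a careful argument in the quotient topology (reducing to the case $g_n=g$ eventually and using the homeomorphism property of $\hat{\theta}_g$).

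Finally I would verify the intertwining relation. For $x\in X$ and $g\in G$,
$$\tilde{\theta}_g(\varphi(x))=\tilde{\theta}_g([x,e])=[x,eg]=[x,g]=[\hat{\theta}_g(x),e]=\varphi(\hat{\theta}_g(x)),$$
where the penultimate equality is exactly the identity noted in Remark~\ref{R3}. Hence $\varphi\circ\hat{\theta}_g=\tilde{\theta}_g\circ\varphi$ for every $g\in G$, so $\varphi$, together with the identity automorphism on $G$, witnesses a conjugacy between $G\curvearrowright_{\hat{\theta}}X$ and $G\curvearrowright_{\tilde{\theta}}\widetilde{X}$.
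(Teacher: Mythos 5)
Your proof is correct and follows essentially the same route as the paper: the same conjugating map $\varphi(x)=[x,e]$, the same intertwining computation via the identity $[x,g]=[\hat{\theta}_{g}(x),e]$ from Remark \ref{R3}, and the same compactness-plus-Hausdorff upgrade from continuous bijection to homeomorphism. The only difference is cosmetic: you obtain continuity of $\varphi$ directly as $\pi$ composed with $x\mapsto (x,e)$, whereas the paper routes it through the auxiliary map $\phi(x,g)=\hat{\theta}_{g}(x)$ and the identity $\varphi\circ\phi=\pi$; your version is, if anything, cleaner.
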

\begin{proof}
	Define  $ \phi :(x,g) \in X\times G\rightarrow \hat{\theta}_{g}(x)\in X$ and $ \varphi:x\in X\rightarrow  [x,e]\in \widetilde{X}$. Then  $ \phi$ is continuous and $ \varphi $ is bijective. Moreover, $\varphi(\phi(x,g))=\varphi( \hat{\theta}_{g}(x))=[\hat{\theta}_{g}(x),e]=[x,g]=\pi(x,g)$ for $(x,g)\in X\times G$. Let $U$ be an open subset in  $\widetilde{X}$. Since $ \pi^{-1}(U)=  \phi^{-1}(\varphi^{-1}(U) )$ is open in $ X\times G $, the continuity of $\phi$ implies that $ \varphi^{-1}(U) $ is  open in $ X $, it follows that $\varphi   $ is continuous. By compactness of  $X$,  we conclude that   $ \varphi $ is a homeomorphism. Furthermore, for $x\in X,g\in G$, we have
	$$\varphi(\hat{\theta}_{g}(x))=[\hat{\theta}_{g}(x),e]=[x,g]=\widetilde{\theta}_{g}([x,e])=\widetilde{\theta}_{g}(\varphi(x)).$$ Therefore  $ G\curvearrowright_{\hat{\theta}} X$  and  $ G\curvearrowright_{ \tilde{\theta}} \widetilde{X}$ are conjugate.
	
\end{proof}

\section{Injective actions on compactifications  of semi-groups}
In Section 2, we see that each injective right action of a semi-group can be  dilated to be  a group  action. Recall that a compactification of a   locally compact Hausdorff space $Z$ is a  compact Hausdorff space   containing a dense continuous image of $Z$. In this section, we consider   the right injective  action  of  a semi-group on its compactifications.

Let $G$ be a countable group, $P$ be a right Ore sub-semigroup of $G$ and $G=PP^{-1}$. Denote by  $l^{\infty}(G)$ the set of all  bounded  complex valued functions on $G$. It is  a unital abelian $C^{*}$-algebra. Let  $\rho_{g}$ be the operator on $l^{\infty}(G)$ such that $\rho_{g}(\xi)(h)=\xi(hg)$ for $\xi\in l^{\infty}(G)$ and $ g,h\in G$. Then $G\curvearrowright_{\rho}l^{\infty}(G)$ is a   group action by $*$-isomorphisms.

For any unial $C^{*}$-subalgebra $\mathcal{A}$ of $ l^{\infty}(G) $, let $\Sigma_{\mathcal{A}} $ be the maximal ideal space of $\mathcal{A}$, or, equivalently the pure (or, multiplicative) state space of $\mathcal{A}$. Then $\Sigma_{\mathcal{A}}$ is compact Hausdorff and $\mathcal{A}$ is isomorphic to $C(\Sigma_{\mathcal{A}})$ by Gelfand-Naimark theory. Moreover, we have a map $g\in G\rightarrow \hat{g}\in\Sigma_{\mathcal{A}} $, where  $ \hat{g} (\xi)=\xi(g)$ for $\xi\in \mathcal{A} $, whose range is dense in $ \Sigma_{\mathcal{A}} $, i.e., $\Sigma_{\mathcal{A}} $ is  a compactification of $G$ (with discrete topology). Assume that $\mathcal{A}$ is invariant under $\rho$, i.e. $ \rho_{g}( \mathcal{A})=\mathcal{A} $ for each $g\in G$. Then the automorphism action of $G$ on $\mathcal{A}$ induces an action $\theta$ of $G$ on $\Sigma_{\mathcal{A}}$ by homeomorphisms, defined by:
$$\theta_{g}(\hat{h})=\widehat{hg} \mbox{ for $g\in G$ and $\hat{h}\in \Sigma_{\mathcal{A}}$}.$$
Let $X$ be the closure of $\{\hat{a}:a\in P\}$ in $ \Sigma_{\mathcal{A}} $. Then $X$ is a compactification of $P$ (with discrete topology). Moreover, $\theta_{a}(X)\subseteq X$ and the map $\theta_{a}:X\rightarrow X$ is injective for each $a\in P$, and $\theta_{a}\theta_{b}=\theta_{ba}$ for all $a,b\in P$, i.e., $ P\curvearrowright_{\theta} X$ is a right injective action of $P$.

We know that if the above $C^*$-algebra $\mathcal{A}$ is countably generated then $ \Sigma_{\mathcal{A}} $ is  second-countable and metrizable.
For $S\subset G$, let $f=\chi_S\in l^{\infty}(G)$ be the characteristic function on $S$ and $\mathcal{A}_{f}$ be the unital  $C^*$-algebra generated by $\{I,\rho_g(f):\, g\in G\}$ in $l^{\infty}(G)$, where $I$ is the unit of $ l^{\infty}(G) $. Then $\mathcal{A}_{f}  $ is  invariant under $\rho$. Let $ \Sigma_{\mathcal{A}_{f}}$, $X$, and $ P\curvearrowright_{\theta} X$ be as in the above paragraph. Let $Y$ be the closure of $\{\widehat{g}:\, g\in S\}$ in $\Sigma_{\mathcal{A}_{f}}$.  For $\gamma\in \Sigma_{\mathcal{A}_{f}}$, we let $$A_{\gamma}=\{h\in G: \gamma(\rho_{h^{-1}}(f))=1\}=\{h\in G:\, \gamma(\chi_{Sh})=1\}.$$ In particular, $A_{\widehat{g}}=S^{-1}g$ for $g\in G$. We consider the shift action $\beta$ of $G$ on $\{0,1\}^G$ by
$$\beta_{g}(\xi)(h)=\xi(hg^{-1}),\, \mbox{for $g, h\in G$, $\xi\in \{0,1\}^G$}.$$
One can check  the map $\pi:\, \gamma\in  \Sigma_{\mathcal{A}_{f}}\rightarrow \chi_{A_{\gamma}}\in \{0,1\}^{G}$ is continuous, injective and $G$-equivariant, i.e., $\beta_g\pi=\pi\theta_g$ for each $g\in G$. Put
$$\widetilde{ \Sigma}=\pi(\Sigma_{\mathcal{A}_{f}}),\,\, \widetilde{X}=\pi(X), \widetilde{Y}=\pi(Y).$$
Remark that $\widetilde{Y}$ is the closure of $\{\chi_{S^{-1}h}:\, h\in S\}$ in $\{0,1\}^G$.

\begin{theorem} Assume that $P\subseteq S\subseteq G$ and $SP\subseteq S$. Then $P\curvearrowright_{\theta} Y$ is an injective right action of $P$ on $Y$ whose transformation groupoid $Y\rtimes P$ is \'{e}tale. In particular, when
	$S=P$, $P\curvearrowright_{\theta} X$ is conjugate to the right action of $P$ on the order compactification of $P$.
\end{theorem}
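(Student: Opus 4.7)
The plan is to route everything through the $G$-equivariant embedding $\pi:\Sigma_{\mathcal{A}_f}\hookrightarrow\{0,1\}^G$ introduced above, so that all topological questions about $Y$ become statements about cylinder sets in the product topology on $\{0,1\}^G$. For the first assertion, each $\theta_a$ ($a\in P$) is a homeomorphism of the ambient space $\Sigma_{\mathcal{A}_f}$, hence continuous and injective on $Y$. For $g\in S$, $\theta_a(\hat g)=\widehat{ga}$ with $ga\in SP\subseteq S$, so the dense subset $\{\hat g:g\in S\}\subseteq Y$ is $\theta_a$-invariant, and by continuity $\theta_a(Y)\subseteq Y$. The semigroup laws $\theta_a\theta_b=\theta_{ba}$ and $\theta_e=\mathrm{id}$ are inherited from the ambient action of $G$, so $P\curvearrowright_\theta Y$ is a right injective action.

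For étaleness, by Lemma 2.2 it suffices to show each $\theta_a(Y)$ is open in $Y$. Identify $Y$ with $\widetilde Y=\pi(Y)\subseteq\{0,1\}^G$ (a homeomorphism since $Y$ is compact and $\pi$ is continuous injective), under which $\theta_a$ corresponds to the shift $\beta_a$ satisfying $\beta_a(\chi_{S^{-1}h})=\chi_{S^{-1}(ha)}$. Consider the clopen cylinder $U_a:=\{\xi\in\{0,1\}^G:\xi(a)=1\}$. The central claim is $\beta_a(\widetilde Y)=U_a\cap\widetilde Y$. For ``$\subseteq$'', observe $\beta_a(\chi)(a)=\chi(e)$, and since $\chi_{S^{-1}h}(e)=1$ for every $h\in S$ (because $e\in S^{-1}h$), this passes to the closure $\widetilde Y$. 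For ``$\supseteq$'', pick $\xi\in U_a\cap\widetilde Y$ and a net $\chi_{S^{-1}h_\lambda}\to\xi$ with $h_\lambda\in S$; eventually $\chi_{S^{-1}h_\lambda}(a)=\xi(a)=1$, forcing $h_\lambda\in Sa$, say $h_\lambda=s_\lambda a$ with $s_\lambda\in S$. Then $\beta_{a^{-1}}(\chi_{S^{-1}h_\lambda})=\chi_{S^{-1}s_\lambda}\in\widetilde Y$, and continuity of $\beta_{a^{-1}}$ yields $\beta_{a^{-1}}(\xi)\in\widetilde Y$, whence $\xi\in\beta_a(\widetilde Y)$.

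For the last assertion, when $S=P$ we have $\pi(\hat g)=\chi_{P^{-1}g}$ for $g\in P$, and $P^{-1}g=\{h\in G:gh^{-1}\in P\}$ is exactly the down-set of $g$ for the right-invariant preorder on $G$ determined by $P$. Hence $\widetilde X=\pi(X)$ is the standard realisation of the order compactification of $P$ inside $\{0,1\}^G$ as described in \cite{article.14}, and the shift $\beta$ restricted to $\widetilde X$ is the canonical right $P$-action on it. Since $\pi|_X:X\to\widetilde X$ is a $P$-equivariant homeomorphism, this produces the asserted conjugacy.

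The main obstacle is the openness step, namely the equality $\beta_a(\widetilde Y)=U_a\cap\widetilde Y$: it depends essentially on the hypothesis $SP\subseteq S$ (to keep $s_\lambda=h_\lambda a^{-1}$ in $S$) and on the availability of the concrete embedding $\pi$ into $\{0,1\}^G$ so that clopen cylinders are at hand as test opens; working intrinsically in the abstract spectrum $Y$ would make this considerably less transparent.
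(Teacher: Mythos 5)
Your proposal is correct and follows essentially the same route as the paper: reduce to the shift action on $\widetilde Y\subseteq\{0,1\}^G$ via the $G$-equivariant embedding $\pi$, and establish openness of $\theta_a(Y)$ through the identity $\beta_a(\widetilde Y)=\{\xi\in\widetilde Y:\xi(a)=1\}$, with the same use of $SP\subseteq S$ in the reverse inclusion. The only (immaterial) difference is that in that reverse inclusion you apply the homeomorphism $\beta_{a^{-1}}$ to the convergent net and invoke closedness of $\widetilde Y$, whereas the paper extracts a convergent subsequence of $\{\chi_{S^{-1}b_n}\}$ by compactness.
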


\begin{proof} Since $\theta_a(\widehat{h})=\widehat{ha}\in Y$ for $a\in P$ and $h\in S$, it follows from the assumption that $\theta_a(Y)\subseteq Y$. Thus $P\curvearrowright_{\theta} Y$ is an injective right action, and $P\curvearrowright_{\theta} Y$ and $P\curvearrowright_{\beta} \widetilde{Y}$ are conjugate. Next we show that $\theta_a(Y)$ is open in $Y$ for each $a\in P$.
	
	We claim that $\beta_a(\widetilde{Y})=\{\xi\in \widetilde{Y}:\, \xi(a)=1\}$ for $a\in P$.
	
	In fact, fix $a\in P$, for $\xi\in \beta_a(\widetilde{Y})\subseteq \widetilde{Y}$, choose $\eta\in \widetilde{Y}$ with $\xi=\beta_a(\eta)$. Since $e\in S^{-1}h$ for each $h\in S$, we have $\chi_{S^{-1}h}(e)=1$ for each $h\in S$, thus $\eta(e)=1$. Hence
	$\xi(a)=\beta_a(\eta)(a)=\eta(e)=1$.
	
	On the other hand, for $\xi\in \widetilde{Y}$ with $ \xi(a)=1$, we choose $\{a_n\}\subset S$ with $\chi_{S^{-1}a_n}\rightarrow \xi$ in $\{0,1\}^G$. So $\chi_{S^{-1}a_n}(a)\rightarrow \xi(a)=1$. It follows   that there exists $N$ such that $a\in S^{-1}a_n$ for every $n\geq N$. Choose $b_n\in S$ such that $a=b_n^{-1}a_n$ for $n\geq N$. Also since $\widetilde{Y}$ is compact, we can assume that $\{\chi_{S^{-1}b_n}\}$ converges to $\varsigma$ in $\widetilde{Y}$. It follows from the continuity of the action $\beta$ that $\beta_a(\chi_{S^{-1}b_n})\rightarrow \beta_a(\varsigma)$, which implies that $\chi_{S^{-1}b_na}\rightarrow \beta_a(\varsigma)$. Thus $\xi=\beta_a(\varsigma)$. We finish the claim.
	
	From the claim, we have $\beta_a(\widetilde{Y})=\widetilde{Y}\cap\{\xi\in \{0,1\}^G:\, \xi(a)=1\}$, thus $\beta_a(\widetilde{Y})$ is open in $\widetilde{Y}$. Consequently,  $\theta_a(Y)$ is open in $Y$ for each $a\in P$,  and transformation groupoid $Y\rtimes P$ is \'{e}tale. From the above proof and the argument to  the order compactification of $P$ in \cite{article.14}, when $S=P$, $P\curvearrowright_{\theta} X$ is conjugate to the right action of $P$ on the order compactification of $P$.
\end{proof}

Let $ P_{\infty}=P\cup \{\infty\}$ be the one-point compactification of $P$.  We consider the right action $ P \curvearrowright_{\sigma}P_{\infty}$, defined by  $$\sigma_{a}(b)=ba,\;\sigma_{a}(\infty)=\infty,\;\;\text{for}\; a,b\in P.$$
Remark that $\sigma_{a}( P_{\infty})=Pa\cup \{\infty\}$ is open in $  P_{\infty} $ when $P \setminus Pa$ is finite for each $a\in P$, in this case the transformation gropoid associated to the injective action is \'{e}tale. For a complex-valued function $\xi$ on $G$, we say that $\lim_{g\rightarrow \infty}\xi(g)$ exists if there exists a complex number $\lambda$ such that, for any $\varepsilon>0$, there exists a finite subset $E$ of $G$ such that $|\xi(g)-\lambda|<\varepsilon$ for all $g\notin E$. In this case, we write $ \lim_{g\rightarrow \infty}\xi(g)=\lambda $.

\begin{proposition} Assume that $S=\{e\}$ and $f=\delta_e\in l^{\infty}(G)$ is the characteristic function on $\{e\}$. Then $\mathcal{A}_{f} =\{\xi\in  l^{\infty}(G): \lim_{g\rightarrow \infty}\xi(g) \;\text{exists}\} $, and $\Sigma_{\mathcal{A}_f}$ and $X$ are homeomorphic to the one-point compactifications of $G$ and $P$, respectively. Moreover, the injective action $P\curvearrowright_{\theta} X$ is conjugate to $ P \curvearrowright_{\sigma}P_{\infty}$. Thus the transformation groupoid $X\rtimes P$ is \'{e}tale if and only if $P \setminus Pa$ is finite for each $a\in P$.
\end{proposition}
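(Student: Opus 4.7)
The plan is to identify $\mathcal{A}_f$ with $\mathbb{C}I+c_0(G)$, which by Gelfand duality is the algebra of continuous functions on the one-point compactification $G_\infty$ of the discrete group $G$, and then to locate $X$ and the action of $P$ inside $G_\infty$.

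First I would compute $\rho_g(f)(h)=f(hg)=\delta_e(hg)=\delta_{g^{-1}}(h)$ for all $g,h\in G$, so that the unital $*$-subalgebra of $l^{\infty}(G)$ generated by $\{\rho_g(f):g\in G\}$ is the linear span of $\{I\}\cup\{\delta_h:h\in G\}$, i.e., $\mathbb{C}I+c_{00}(G)$ where $c_{00}(G)$ denotes the finitely supported functions. Taking the norm closure in $l^{\infty}(G)$ yields $\mathcal{A}_f=\mathbb{C}I+c_0(G)$, and a short direct check shows $\mathbb{C}I+c_0(G)=\{\xi\in l^{\infty}(G):\lim_{g\to\infty}\xi(g)\text{ exists}\}$, with the limit equal to the constant coefficient; this gives the description of $\mathcal{A}_f$ in the statement.

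Next, by Gelfand--Naimark, $\Sigma_{\mathcal{A}_f}$ is identified with $G_\infty$: the characters of $\mathbb{C}I+c_0(G)$ are exactly the evaluations at points $g\in G$ (using that the idempotents $\delta_g$ are pairwise orthogonal) together with the functional $\lambda I+\eta\mapsto\lambda$, and the topology agrees with the one-point compactification because $\eta(g_n)\to 0$ for every $\eta\in c_0(G)$ whenever $g_n$ eventually leaves every finite subset of $G$. Under this identification $\hat g$ corresponds to $g\in G\subset G_\infty$, so $X=\overline{\{\hat a:a\in P\}}$ equals the closure of $P$ inside $G_\infty$, which, when $P$ is infinite (the only nontrivial case), is exactly $P\cup\{\infty\}=P_\infty$. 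I would then define $\varphi:P_\infty\to X$ by $\varphi(a)=\hat a$ for $a\in P$ and $\varphi(\infty)=\infty$; this map is bijective, restricts to a homeomorphism onto the open discrete subset $\{\hat a:a\in P\}\subset X$, and is continuous at $\infty$ because complements of finite subsets of $P$ map to complements of finite subsets of $X$, so by compactness $\varphi$ is a homeomorphism.

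Equivariance of $\varphi$ on $P$ is immediate from $\varphi(\sigma_a(b))=\widehat{ba}=\theta_a(\hat b)$. At the point $\infty$ I would verify $\theta_a(\infty)=\infty$ by choosing any net $b_i\in P$ with $\hat{b_i}\to\infty$ and observing that $b_ia$ also leaves every finite set, since the preimage under right multiplication by $a$ of a finite set is finite; continuity of $\theta_a$ then gives $\theta_a(\hat{b_i})=\widehat{b_ia}\to\infty$. This establishes the conjugacy of $P\curvearrowright_\theta X$ with $P\curvearrowright_\sigma P_\infty$. Finally, to pin down the étale condition I would apply Lemma 2.2: since $\theta_a(X)=Pa\cup\{\infty\}$ contains $\infty$, it is open in $P_\infty$ iff its complement $P\setminus Pa$ is a finite subset of $P$, so $X\rtimes P$ is étale iff $P\setminus Pa$ is finite for every $a\in P$. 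I expect the most delicate step to be the identification of $\Sigma_{\mathcal{A}_f}$ with $G_\infty$ together with the verification that $\theta_a$ fixes the added point, but both reduce to standard one-point compactification arguments once the spectral picture is in place.
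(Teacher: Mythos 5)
Your proposal is correct and follows essentially the same route as the paper: compute $\rho_g(f)=\delta_{g^{-1}}$, identify $\mathcal{A}_f$ with $\mathbb{C}I+c_0(G)$ (the functions admitting a limit at infinity), recognize the spectrum as the one-point compactification with the extra character $\xi\mapsto\lim_{g\to\infty}\xi(g)$, and transport the action. You are in fact somewhat more careful than the paper in spelling out the character space, the fixed point $\theta_a(\infty)=\infty$, and the implicit assumption that $P$ is infinite.
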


\begin{proof} Remark that $\rho_{g}(f)=\delta_{g^{-1}}$ for each $g\in G$. Thus $\mathcal{A}_f$ is the closure of $\mathbb{C}G$ under the supremum norm, so $\mathcal{A}_{f} =\{\xi\in  l^{\infty}(G): \lim_{g\rightarrow \infty}\xi(g) \;\text{exists}\} $. One can check   the map $g\in G\rightarrow \widehat{g}\in \Sigma_{\mathcal{A}_f}$ is injective. Define $\gamma_0(\xi)=\lim_{g\rightarrow \infty}\xi(g)$ for $\xi\in \mathcal{A}_f$. We have that $\gamma_{0}\in \Sigma_{\mathcal{A}_f}$, $\Sigma_{\mathcal{A}_f}=\{\widehat{g}:\,g\in G\}\cup\{\gamma_0\}$. Thus $\Sigma_{\mathcal{A}_f}$ and $X$ are homeomorphic to the one-point compactifications of $G$ and $P$, respectively.  By the action of $\theta$, we have $P\curvearrowright_{\theta} X$ is conjugate to $ P \curvearrowright_{\sigma}P_{\infty}$.
	
\end{proof}

From \cite[Proposition 5.1]{article.14}, the injective action of a semi-group on its ordered compactification is determined uniquely up to conjugacy by three conditions. For infinite countable right Ore sub-semigroups $P$ and $S$, respectively, of two groups $G$ and $H$, one can check that, two actions $ P \curvearrowright_{\sigma}P_{\infty}$   and $S \curvearrowright_{\rho}S_{\infty}$ are orbit equivalent, and   they are conjugate if and only if $P$ and $S$ are semi-group isomorphic. Moreover, if two actions are continuously orbit equivalent then $G$ and $H$ are isomorphic.

\begin{theorem}
	Assume that $P \setminus Pa$ is finite for each $a\in P$. Let $ P\curvearrowright_{\rho} X$ be an injective right action of $P$ on a compact Hausdorff space $X$.  If
	\begin{enumerate}
		\item[(i)] there exists a unique $x_{\infty}$ in $X$ such that $Q_{x_{\infty}}=G$, and
		\item[(ii)] there exists   $x_{0}$ in $X$ such that the map $a\in P\rightarrow \rho_{a}(x_{0})\in X$ is injective and has dense range in $X$,
	\end{enumerate}
	then $ P \curvearrowright_{\sigma}P_{\infty}$   and $ P\curvearrowright_{\rho} X$  are conjugate.
\end{theorem}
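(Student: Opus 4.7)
The plan is to define $\varphi\colon P_\infty\to X$ by $\varphi(a)=\rho_a(x_0)$ for $a\in P$ and $\varphi(\infty)=x_\infty$, and to verify that $\varphi$ is a homeomorphism intertwining $\sigma$ and $\rho$ (with the semi-group isomorphism taken to be $\mathrm{id}_P$). Before anything else I would establish that $x_\infty$ is a common fixed point of $\rho$: for $a\in P$ and any $g=mn^{-1}\in G$, one has $\rho_m(\rho_a(x_\infty))=\rho_{am}(x_\infty)$, and since $am\cdot n^{-1}\in Q_{x_\infty}=G$ there exists $z\in X$ with $\rho_{am}(x_\infty)=\rho_n(z)$; hence $Q_{\rho_a(x_\infty)}=G$, and the uniqueness clause of (i) forces $\rho_a(x_\infty)=x_\infty$.

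The main technical step is continuity of $\varphi$, which, since $P$ is discrete in $P_\infty$, reduces to continuity at $\infty$: for every net $(a_\lambda)\subset P$ eventually leaving every finite subset of $P$, I must show $\rho_{a_\lambda}(x_0)\to x_\infty$ in $X$. By compactness of $X$ it suffices to prove that every subnet limit $y$ of $(\rho_{a_\lambda}(x_0))$ has $Q_y=G$, for uniqueness in (i) will then force $y=x_\infty$. Given $g=mn^{-1}\in G$, I pass to a convergent subnet so that $\rho_m(y)=\lim_\lambda \rho_{a_\lambda m}(x_0)$; since $P\setminus Pn$ is finite, for large $\lambda$ I may write $a_\lambda m=c_\lambda n$ with $c_\lambda\in P$, so $\rho_{a_\lambda m}(x_0)=\rho_n(\rho_{c_\lambda}(x_0))$. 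Extracting a further subnet with $\rho_{c_\lambda}(x_0)\to z\in X$ and using continuity of $\rho_n$ yields $\rho_m(y)=\rho_n(z)$, so $g\in Q_y$.

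The remaining verifications are routine. Surjectivity of $\varphi$ follows because $\varphi(P_\infty)$ is compact, hence closed in $X$, and contains the dense set $\{\rho_a(x_0):a\in P\}$ by (ii). Injectivity on $P$ is immediate from (ii); the only remaining possibility $\rho_a(x_0)=x_\infty$ for some $a\in P$ is excluded by the fixed-point identity, as it would force $\rho_{ab}(x_0)=\rho_b(\rho_a(x_0))=\rho_b(x_\infty)=x_\infty$ for every $b\in P$, contradicting injectivity of $c\mapsto\rho_c(x_0)$ along the infinite set $aP$. A continuous bijection from a compact space to a Hausdorff space is a homeomorphism, so $\varphi$ is a homeomorphism. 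For the intertwining, $\varphi(\sigma_a(b))=\rho_{ba}(x_0)=\rho_a(\rho_b(x_0))=\rho_a(\varphi(b))$ for $a,b\in P$, while at $\infty$ it is precisely the fixed-point identity.

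The hardest part will be the continuity at $\infty$: this is the only step that requires all three hypotheses — the cofiniteness of $Pn$ in $P$, the compactness of $X$, and the uniqueness clause of (i) — to be combined simultaneously, and the argument has to be organized carefully around an arbitrarily pre-chosen decomposition $g=mn^{-1}$ together with two successive subnet extractions.
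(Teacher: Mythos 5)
Your proposal is correct and follows essentially the same route as the paper: the same map $a\mapsto\rho_a(x_0)$, $\infty\mapsto x_\infty$, with continuity at $\infty$ obtained by showing that every cluster point $y$ of $\rho_{a_\lambda}(x_0)$ satisfies $Q_y=G$ and hence equals $x_\infty$ by the uniqueness in (i). The only difference is at the level of detail: where the paper invokes the Vietoris-continuity of $z\mapsto Q_z$ from Renault--Sundar to conclude $Q_y=G$, you verify this directly from the cofiniteness of $Pn$ in $P$, and you also write out the routine injectivity and surjectivity checks (including $\rho_a(x_0)\neq x_\infty$) that the paper leaves implicit.
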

\begin{proof}
	Define $ \Lambda: P_{\infty}\rightarrow X $ by $\Lambda(a)=\rho_{a}(x_{0})$ for $a\in P$, and $\Lambda(\infty) =x_{\infty}$. Then $\Lambda$ is continuous on $P$. It suffices therefore to show that $\Lambda$ is continuous  at $ \infty $. For an arbitrary open neighbourhood $U$ of $x_{\infty}$ in $X$, we only need to show the set $F=\{a\in P:\rho_{a}(x_{0})\notin U\}$ is finite. For otherwise, if $F$ is infinite, we can choose a sequence $\{a_{n}\}$ in $F$, where $x_{n}\neq x_{m}$ for $n\neq m$, such that $\{a_{n}\}$ converges to $\infty$ in $P$. By \cite[Remark 4.5]{article.14}, the map $z\in P_{\infty}\rightarrow Q_z\in \mathcal{C}(G)$ is continuous where $\mathcal{C}(G)$ is the space of all subsets of $G$ with the Vietoris topology. Thus  $Q_{a_{n}}=a_{n}^{-1}P\rightarrow Q_{\infty}=G$. Note that $X$ is compact, so we can assume that $ \rho_{a_{n}}(x_{0})\rightarrow x$ in $X$, it follows from the continuity of the map $x\in X\rightarrow Q_x\in \mathcal{C}(G)$ that $ Q_{\rho_{a_{n}}(x_{0})} \rightarrow Q_{x}$. Since $ \rho_{a_{n}}(x_{0})\notin U $, we have $x\neq x_{\infty}$.  Also since $ Q_{\rho_{a_{n}}(x_{0})} =a_{n}^{-1}Q_{x_{0}}=a_{n}^{-1}P\rightarrow G$, we see that $Q_{x}=G$, in contradiction with condition (ii). Thus $F$ is finite. Hence $\Lambda$ is continuous and thus is a homeomorphism.
	
	Moreover, for each $a,m\in P$,
	$$\rho_{a}\Lambda(m)=\rho_{a}\rho_{m}(x_{0})=\rho_{ma}(x_{0})=\Lambda(ma)=\Lambda(\sigma_{a}(m)).$$ We claim that $\rho_{a}(x_{\infty})=x_{\infty}$ for each $a\in P$. In fact, write $\rho_{a}(x_{\infty})=u$. Then $Q_{u}=Q_{\rho_{a}(x_{\infty})}=a^{-1}Q_{x_{\infty}}=G$, it follows that $u=x_{\infty}$. Hence $\rho_{a}\Lambda(\infty)=\rho_{a}(x_{\infty})=x_{\infty}=\Lambda(\sigma_{a}(\infty))$ for each $a\in P$, so we conclude that $ P \curvearrowright_{\sigma}P_{\infty}$   and $ P\curvearrowright_{\rho} X$  are conjugate.
	
\end{proof}

\begin{example}\label{E1}
	Let $\mathbb{Z}$ and $\mathbb{N}$ denote  the (additive) group of integer numbers and its sub-semigroup of natural numbers respectively. Let $\mathbb{Q}^*_{>0}$ and $\mathbb{N}^*_{>0}$ denote  the (multiplication) group of positive rational numbers  and its sub-semigroup of positive integer numbers respectively. Let $\mathbb{N}^*_{\infty}$ and $\mathbb{N}_{\infty}$ denote the  one-point compactification of $\mathbb{N}^{*}_{>0}$ and $\mathbb{N}$ respectively.
	
	Consider the right injective actions $  \mathbb{N}\curvearrowright_{\theta}   \mathbb{N}^*_{\infty}$ and $ \mathbb{N}^{*}_{>0}\curvearrowright_{\rho}   \mathbb{N}^*_{\infty}$, defined by
	$$\theta_{m}(n)=n+m, \theta_{m}(\infty)=\infty,\;\; \text{for}\; m \in \mathbb{N} , n\in \mathbb{N}^{*}_{>0}.$$
	$$\rho_{m}(n)=nm, \rho_{m}(\infty)=\infty \;\; \text{for}\;\; m,n\in \mathbb{N}^{*}_{>0}.$$
	Then both of them are topologically free, and for $  \mathbb{N}\curvearrowright_{\theta}   \mathbb{N}^*_{\infty}$, $$  Q_{k}=
	\{-k+1,-k+2,-k+3,\cdots\}, \; [k]_{\theta}=\mathbb{N}^*_{>0}\;	\text{for} \;k\in \mathbb{N}^*_{>0},$$ $$ Q_{\infty}=\mathbb{Z},\; [\infty]_{\theta} = \{\infty\}.$$ And for $ \mathbb{N}^{*}_{>0}\curvearrowright_{\rho}   \mathbb{N}^*_{\infty}$ ,  $$  Q_{k}=
	\{1/k,2/k,3/k,\cdots\},\; [k]_{\rho} = \mathbb{N}^*_{>0}\; 	\text{for}\; k\in \mathbb{N}^*_{>0},$$    $$ Q_{\infty}=\mathbb{Q}^*_{>0},\; [\infty]_{\rho} = \{\infty\}.  $$
	
	Similarly, we can define right injective actions $\mathbb{N}\curvearrowright_{\theta}   \mathbb{N}_{\infty}$ and $ \mathbb{N}^{*}_{>0}\curvearrowright_{\rho}   \mathbb{N}_{\infty}$.
	Note that $\mathbb{N}^*  \setminus ( \mathbb{N}^* + m)$ and $\mathbb{N}   \setminus ( \mathbb{N}  + m)$ are finite for each $m\in \mathbb{N}$, but   $\mathbb{N}^*  \setminus  \mathbb{N}^*m$ and $\mathbb{N}   \setminus  \mathbb{N}m$ are not finite for each $m\in \mathbb{N}^*_{>0}$. Theoretically, we discuss the orbit  equivalence of injective actions under the conditions of Remark 2.3, but for this example we only  consider it in terms  of the definition of orbit equivalence, and we give the following result.

\end{example}

\begin{proposition}
	\begin{enumerate}
		\item[(i)] Injective actions $  \mathbb{N}\curvearrowright_{\theta}   \mathbb{N}^*_{\infty}$  and  $ \mathbb{N}^*_{>0}\curvearrowright_{\rho}   \mathbb{N}^*_{\infty}$  are   orbit equivalent, but they are not continuously orbit equivalent;
		
		\item[(ii)]Injective actions $  \mathbb{N}\curvearrowright_{\theta} \mathbb{N}_{\infty}$ and  $ \mathbb{N}^*_{>0}\curvearrowright_{\rho}  \mathbb{N}_{\infty}$ are not orbit equivalent.
	\end{enumerate}
\end{proposition}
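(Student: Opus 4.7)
For part (i), the first step is to realise the orbit equivalence. From the computations recorded in Example \ref{E1}, both actions have orbit partition $\{\mathbb{N}^*_{>0}, \{\infty\}\}$ on $\mathbb{N}^*_{\infty}$, so the identity map $\varphi = \mathrm{id}$ immediately sends orbits to orbits and is an orbit equivalence.

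For the failure of continuous orbit equivalence, I will apply Theorem 3.6: both actions are topologically free, so continuous orbit equivalence is equivalent to isomorphism of the transformation groupoids as \'{e}tale groupoids. Any such isomorphism preserves isotropy groups at corresponding units. The point $\infty$ is distinguished intrinsically as the unique unit whose orbit is a singleton, so any isomorphism must send $\infty \mapsto \infty$ and restrict to a group isomorphism between the corresponding isotropy groups. The isotropy at $\infty$ equals the ambient group in both cases (since $\theta_m(\infty) = \rho_m(\infty) = \infty$ forces $u(\infty,g) = \infty$ for every element of the enveloping group), giving $\mathbb{Z}$ for $\mathbb{N}\curvearrowright_{\theta} \mathbb{N}^*_{\infty}$ and $\mathbb{Q}^*_{>0}$ for $\mathbb{N}^*_{>0}\curvearrowright_{\rho} \mathbb{N}^*_{\infty}$. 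Since $\mathbb{Z}$ is cyclic whereas $\mathbb{Q}^*_{>0}$ is free abelian of countably infinite rank (by unique prime factorisation), these groups are not isomorphic, hence neither are the groupoids, ruling out continuous orbit equivalence.

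For part (ii), the plan is to compute the orbit partitions of the two actions on $\mathbb{N}_{\infty}$ and observe that their cardinalities differ, obstructing any orbit equivalence. For $\mathbb{N}\curvearrowright_{\theta} \mathbb{N}_{\infty}$, translation by non-negative integers collapses $\mathbb{N}$ into a single orbit while $\infty$ is fixed, yielding two orbits. For $\mathbb{N}^*_{>0}\curvearrowright_{\rho} \mathbb{N}_{\infty}$, the crucial new phenomenon is that $0$ is an additional fixed point since $\rho_m(0) = 0 \cdot m = 0$ for all $m \geq 1$, so $[0]_{\rho} = \{0\}$; a short verification using $u(k,g) = kg$ and ruling out $y = 0$ or $y = \infty$ gives $[k]_{\rho} = \mathbb{N}^*_{>0}$ for $k \geq 1$ and $[\infty]_{\rho} = \{\infty\}$, yielding three orbits. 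Since an orbit equivalence induces a bijection between orbit spaces, and the counts $2$ and $3$ disagree, no such orbit equivalence exists. The most delicate step in the overall argument is the isotropy comparison in part (i), which requires first pinning down $\infty$ as intrinsic to the groupoid (done via the singleton-orbit characterisation) before the group-isomorphism obstruction can be invoked.
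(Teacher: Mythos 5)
Your realisation of the orbit equivalence in (i) and your orbit-counting argument in (ii) are both correct and essentially what the paper does (the paper counts singleton orbits, $1$ versus $2$, rather than all orbits, $2$ versus $3$; the observation is the same). The gap is in your disproof of continuous orbit equivalence in (i). You invoke Theorem 3.6 to convert a continuous orbit equivalence into an isomorphism of transformation groupoids, but Theorem 3.6 (through Lemma 3.5) is proved under the standing hypothesis of Remark \ref{R1} that $\theta_a(X)$ is open in $X$ for every $a\in P$, i.e.\ that the transformation groupoids are \'{e}tale. This fails for $\mathbb{N}^*_{>0}\curvearrowright_{\rho}\mathbb{N}^*_{\infty}$: for $m\geq 2$ the image $\rho_m(\mathbb{N}^*_{\infty})=m\mathbb{N}^*_{>0}\cup\{\infty\}$ is not a neighbourhood of $\infty$, since $\mathbb{N}^*_{>0}\setminus m\mathbb{N}^*_{>0}$ is infinite. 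The paper flags exactly this point at the end of Example \ref{E1} (``we only consider it in terms of the definition of orbit equivalence'') and for that reason argues directly from Definition \ref{De1} rather than through Theorem 3.6: it derives $a(k,-k+l)=\varphi(l)/\varphi(k)$, uses continuity of $a$ at $(\infty,g)$ to produce $\psi$ with $\psi(g)=\varphi(k+g)/\varphi(k)$ for all large $k$, deduces $\psi(n)=\psi(1)^{n}$, and concludes that $\varphi$ cannot be a self-homeomorphism of $\mathbb{N}^*_{\infty}$.

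Your underlying obstruction --- the isotropy groups at $\infty$ are $\mathbb{Z}$ and $\mathbb{Q}^*_{>0}$, which are not isomorphic, and $\infty$ is intrinsically pinned down as the unique singleton orbit --- is sound and arguably cleaner than the paper's growth estimate, but to use it you must show \emph{without} Theorem 3.6 that a continuous orbit equivalence forces $g\mapsto a(\infty,g)$ and $h\mapsto b(\infty,h)$ to be mutually inverse group homomorphisms between $\mathbb{Z}$ and $\mathbb{Q}^*_{>0}$. This can in fact be done for this example (use continuity of $a$ at $(\infty,g)$ together with the fact that every $k\in\mathbb{N}^*_{>0}$ is an isolated point with trivial isotropy under both actions to obtain the cocycle identity at $\infty$), but that verification is the real content of the proof and is missing from your argument as written; citing Theorem 3.6 outside its hypotheses does not supply it.
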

\begin{proof}
	(i)	
	The 	identity mapping  $ \varphi$ on $\mathbb{N}^*_{\infty}$  implements  the orbit equivalence of    $\mathbb{N}\curvearrowright_{\theta}  \mathbb{N}^*_{\infty}$  and  $ \mathbb{N}^*_{>0}\curvearrowright_{\rho}  \mathbb{N}^*_{\infty}$.
	From  Example \ref{E1}, we see that $$\mathbb{N}^*_{\infty}\rtimes \mathbb{N}=\{(\infty,g),(k,-k+l)\;|\; g\in \mathbb{Z},k,l\in\mathbb{N}^*_{>0} \},$$
	$$\mathbb{N}^*_{\infty}\rtimes \mathbb{N}^*_{>0}=\{(\infty,g),(k, l/k)\;|\; g\in \mathbb{Q}^*_{>0} ,k,l\in\mathbb{N}^*_{>0} \}.$$
	Assume that $  \mathbb{N}\curvearrowright_{\theta}   \mathbb{N}^*_{\infty}$  and  $ \mathbb{N}^*_{>0}\curvearrowright_{\rho}   \mathbb{N}^*_{\infty}$  are  continuously orbit equivalent. Then there exist a homeomorphism $\varphi:\mathbb{N}^*_{\infty} \rightarrow \mathbb{N}^*_{\infty}$ and continuous maps $a:\mathbb{N}^*_{\infty}\rtimes  \mathbb{N}\rightarrow \mathbb{Q}^*_{>0}$, $b:\mathbb{N}^*_{\infty}\rtimes \mathbb{N}^*_{>0}\rightarrow \mathbb{Z}$ satisfy equations (\ref{e1}) and (\ref{e2}). In this case, $\varphi(\infty)=\infty$.

	For $k,l\in \mathbb{N}^*_{>0}$, since $ \varphi(l)=\varphi(u(k,-k+l))=u(\varphi(k),a(k,-k+l))= \varphi(k)a(k,-k+l)$, we see that $a(k,-k+l)=\varphi(l)/\varphi(k)$. For any $g\in G$, by the continuity of $a$  at $(\infty,g)$, there exists $m_{g}\in \mathbb{N}^*_{>0}$ with $m_{g}+g\geq1$, such that  $a(k,g)=\psi(g)$ for all $k\geq m_{g}$. Then $\psi(g)= \varphi(k+g)/\varphi(k) $. Calculation gives $ \psi(n)=\psi(1)^{n}$ for $n\in \mathbb{Z}$, it follows that $\varphi  $ cannot be a homeomorphism on $\mathbb{N}^*_{\infty}$. Hence   $  \mathbb{N}\curvearrowright_{\theta}  \mathbb{N}^*_{\infty}$  and  $ \mathbb{N}^*_{>0}\curvearrowright_{\rho}  \mathbb{N}^*_{\infty}$    are not continuously orbit equivalent.

	(ii) For $  \mathbb{N}\curvearrowright_{\theta}  \mathbb{N}_{\infty}$,  note that only the orbit at  $ \infty $ is a single point set,  but for $ \mathbb{N}^*_{>0}\curvearrowright_{\rho}  \mathbb{N}_{\infty}$, the orbits at $0$ and $\infty$ are all  sets of single point. Therefore $  \mathbb{N}\curvearrowright_{\theta} \mathbb{N}_{\infty}$ and  $\mathbb{N}^*_{>0}\curvearrowright_{\rho}  \mathbb{N}_{\infty}$ are not orbit equivalent.

\end{proof}

	\subsection*{Acknowledgements}

This work was supported by the NSF of China (Grant No. 11771379, 11971419, 11271224).

\normalsize

\end{document}